\newcommand{\Poly}{\textsc{P}}
\newcommand{\NP}{\textsc{NP}}
\newcommand{\W}{\textsc{W}}
\newcommand{\hn}{\textrm{hn}}
\newcommand{\ohn}{\overrightarrow{\textrm{hn}}}
\newcommand{\ogn}{\overrightarrow{\textrm{gn}}}
\DeclareMathOperator{\phn}{hn^+}
\DeclareMathOperator{\pgn}{gn^+}
\DeclareMathOperator{\mhn}{hn^-}
\DeclareMathOperator{\mgn}{gn^-}
\newcommand{\oracf}{\overrightarrow{C4}}
\DeclareMathOperator{\ext}{Ext}
\newtheorem{thm}{Theorem}[section]
\newtheorem{prop}[thm]{Proposition}
\newtheorem{lem}[thm]{Lemma}
\newtheorem{cor}[thm]{Corollary}
\newtheorem{conj}[thm]{Conjecture}
\begin{document}
    

\title{Hull and geodetic numbers for some classes of oriented graphs\thanks{This work was partially supported by a CNPq/Funcap project PNE-0112-00061.01.00/16 SPU N: 4543945/2016, a CNPq Universal project 401519/2016-3, CNPq grants 310234/2015-8 and 130467/2018-9, CAPES STIC-AmSud 88881.197438/2018-01 and CAPES-PrInt 88887.466468/2019-00.}}


\author{J.~Ara\'ujo and P.S.M.~Arraes}


\maketitle

\begin{abstract}
    Let \(D\) be an orientation of a simple graph.
    Given $u,v\in V(D)$, a directed shortest \((u,v)\)-path is a \( (u,v) \)-geodesic.
    \( S \subseteq V(D) \) is convex if, for every \( u,v \in S \), the vertices in each \( (u,v) \)-geodesic and in each \( (v,u) \)-geodesic are in \( S \).
    For each \( S \subseteq V(D) \) the (convex) hull of \( S \), denoted by \([S]\), is the smallest convex set containing \( S \).
    \( S \subseteq V(D) \) is a hull set if \([S] = V(D) \).
    \( S \subseteq V(D) \) is a geodetic set of \( D \) if each vertex of \( D \) lies in a \( (u,v) \)-geodesic, for some \( u,v \in S \).
    The cardinality of a minimum hull set (resp. geodetic set) of $G$ is the hull number (resp. geodetic number) of \( D \), denoted by \( \ohn (D) \) (resp. $\ogn(D)$).
    
    We first show a tight upper bound on $\ohn(D)$. Given $k\in\mathbb{Z}_+^*$, we prove that deciding if \(\ohn(D)\leq k\) is \NP-complete when $D$ is an oriented partial cube; and if \(\ogn(D)\leq k\) is \W[2]-hard parameterized by \(k\) and has no $(c \cdot \ln n)$-approximation algorithm, unless \Poly =\NP, even if $D$ has an underlying graph that is bipartite or split or cobipartite.
    We also show polynomial-time algorithms to compute $\ohn(D)$ and $\ogn(D)$ when $D$ is an oriented cactus.
\end{abstract}


\section{Introduction}
\label{sec.intro}

For basic notions on graph theory and computational complexity, the reader is referred to~\cite{BM2008,GJ90, BG2008}.
All graphs in this work are simple and finite, unless explicitly stated otherwise.

Although the first papers related to convexity in graphs study directed graphs~\cite{CIT,EITIST,SROST}, most of the papers we can find in the literature about graph convexities deal with undirected graphs. 
For instance, the hull and geodetic numbers with respect to undirected graphs~\cite{THNOAG,TGNOAG} were first studied in the literature around a decade before their corresponding directed versions~\cite{THNOAOG,TGNOAOG}.

An oriented graph \(D\) is an orientation of a simple graph.
A directed path from \(u\) to \(v\) with minimum number of arcs in \(D\) is a \( (u,v) \)-geodesic for every \(u,v\in V(D)\).
Let \( \mathcal{P} (S) = 2^S \) denote the family of all subsets of the set \( S \).
The interval function \( I: \mathcal{P} (V(D)) \to \mathcal{P} (V(D)) \) applied on a set \( S \subseteq V(D) \) with at least two elements, returns the set \( I(S) \) consisting of the vertices of \( S \) and every vertex lying in some \( (u,v) \)-geodetic such that \( u,v \in S \).
If \( |S| \in \{ 0,1 \} \), we have \( I(S) = S \).

A set \( S \subseteq V(D) \) is (geodesically) convex if \( I(S) = S \); in other words, for every \( u,v \in S \) all the vertices in each \( (u,v) \)-geodesic and in each \( (v,u) \)-geodesic are in \( S \).
For every \( S \subseteq V(D) \) the (convex) hull of \( S \) is the smallest convex set containing \( S \) and it is denoted by \( [S] \).
A hull set of \( D \) is a set \( S \subseteq V(D) \) whose hull is \( V(D) \).
The cardinality of a minimum hull set is the hull number of \( D \) and it is denoted by \( \ohn (D) \).
A geodetic set of \( D \) is a set \( S \subseteq V(D) \) such that \( I(S) = V(D) \).
Analogously, the cardinality of a minimum geodetic set is the geodetic number of \( D \) and it is denoted by \( \ogn (D) \).

Because an oriented graph \( D \) has no symmetric arcs, the parameters $\ohn(D)$ and $\ogn(D)$ are not equivalent to their undirected versions.
For instance, the hull and geodetic numbers of a path $P$ on $2k$ edges, for some positive integer $k$, are both equal to two in the undirected version, while if $D$ is an orientation of $P$ we can have both \( \ohn(D) \) and \( \ogn(D) \) ranging from \( 2 \) to \( 2k+1 \).

With respect to the directed case, most results in the literature provide bounds on the maximum and minimum values of $\ohn(D(G))$ and $\ogn(D(G))$ among all possible orientations $D(G)$ of a given undirected simple graph $G$~\cite{THNOAOG,TGNOAOG,OCGAHNIG}.

It is important to emphasize the results on the parameter $\phn(G)$, the upper orientable hull number of a graph \( G \), since these are the only ones related to the upper bounds we present.
Such parameter is defined in~\cite{THNOAOG} as the maximum value of $\ohn(D(G))$ among all possible orientations $D(G)$ of a simple graph $G$.
In the same article, the authors provide a necessary and sufficient condition for \( \phn(G) = n(G) \), with \( G \) being a non-oriented graph.
They also compare this parameter with others, such as the lower orientable hull number (\( \mhn(G) \)) and the lower and upper orientable geodetic numbers (\( \mgn(G) \) and \( \pgn(G) \) respectively), defined analogously.

There are also few results about some related parameters: the forcing hull and geodetic numbers~\cite{TFHAFGNOG,TFGNOAG}, the pre-hull number~\cite{OTGPHNOAG} and the Steiner number~\cite{OTSGAHNOG} are a few examples.

In this work, we first present a general tight upper bound on the hull number of an arbitrary oriented graph, in Section~\ref{section.st}.
Note that such bound is also a tight upper bound for $\phn(G)$.

Then, we consider as input an oriented graph $D$ and we study the computational complexity of determining $\ohn(D)$ and $\ogn(D)$, when the underlying graph of $D$ belongs to some particular graph class.
Up to our best knowledge, this is the first work to consider such questions. 

It is known that determining the hull number of an undirected partial cube is \NP-hard~\cite{CIPCTHN}.
In Section~\ref{section.bip}, we show that such result can be used to prove that determining whether $\ohn(D)\leq k$, when $D$ is an oriented partial cube, is \NP-complete.
Although the proof requires a careful analysis, the idea is quite simple: by replacing each edge of a partial cube $G$ with a directed $C_4$, we obtain an oriented graph $D$ whose underlying graph is a partial cube, and whose hull number $\ohn(D)$ is the same as $\hn(G)$.
It is important to recall that partial cubes are bipartite graphs.

In Section~\ref{sec.setcover}, we reduce the well-known Set Cover problem to that of determining whether $\ogn (D) \leq k$, even if $D$ is a directed acyclic graph (DAG) whose underlying graph is either bipartite or cobipartite; or if the underlying graph is split (the construction has directed cycles).
Such reduction implies that this problem is not only \NP-complete, but it is \W[2]-hard (parameterized by $k$) and does not admit any approximation algorithm with ratio $c \cdot \ln n$, unless $\Poly = \NP$, as these hardness results hold for the Set Cover problem~\cite{Karp1972,RS1997,DF2012}.

Finally, we prove in Section~\ref{section.tc} that $\ohn(D)$ and $\ogn(D)$ can be computed in polynomial time if $D$ is a cactus, i.e. a graph whose blocks are either edges or induced cycles.

In Section~\ref{section.conc}, we present avenues for further research.

\section{Preliminaries}
\label{sec.pre}

For a positive integer \( k \) we denote \( [k] := \{ 1, \ldots ,k \} \), i.e. the set of naturals smaller than or equal to \( k \).
Given a set \( S \) we denote by \( \mathcal{P} (S) = 2^S\) the family of all subsets of \( S \).
Let \( G \) be a graph or digraph, \( n(G) \) denotes the number of vertices of \( G \).

An \emph{oriented graph} $D$ is an orientation of a simple graph. With a slight abuse of notation, we also use the term subgraph between two oriented graphs, meaning subdigraph.
Given an oriented graph \( D \), a (directed)  \( (u,v) \)-\textit{path} \( P \) is a subgraph of \( D \) such that \( V(P) = \{ u=u_0, u_1, \ldots ,u_k = v \} \) and \( A(P) = \{ (u_{i-1},u_i) \mid i \in \{1,\ldots, k\} \} \).
We can also denote it by \( (u,u_1, \ldots ,u_{k-1},v) \); to represent a path in a non-oriented graph \( G \) we remove the parentheses. 
When \( w \) is a vertex of \( P \) different than \( u \) and \( v \) we say that it is an \textit{internal} vertex of \( P \).
The set of internal vertices of $P$ we call the \emph{interior} of $P$.
The length of a path \( P= (u,u_1, \ldots ,u_{k-1},v) \) is \(k\).
An \( (u,v) \)-path that uses the least number of arcs possible is called an \( (u,v) \)-\textit{geodesic}. 
We denote its length by \( d_D(u,v) \) which represents the \emph{distance} in \( D \) from \( u \) to \( v \).
Notice that \( d_D(u,v) \) might not be equal to \( d_D(v,u) \), since we are dealing with directed graphs.
In the sequel, whenever $D$ is clear in the context we only use  \( d(u,v) \).

Given a vertex \( v \in V(D) \) we define \( N^-(v) := \{ u \in V(D) \mid (u,v) \in A(D) \} \) and \( N^+(v) := \{ u \in V(D) \mid (v,u) \in A(D) \} \).
Moreover we respectively define the \textit{indegree} and the \textit{outdegree} of \( v \) by \( d^-(v) := |N^-(v)| \) and \( d^+(v) := |N^+(v)| \).

For two oriented graphs \( D_1,D_2 \) such that \( D_1 \) is a subgraph of \( D_2 \) we denote this fact by \( D_1 \subseteq D_2 \).
Given an oriented graph \( D \) and \( C \subseteq D \) such that its underlying graph is a cycle, we say that \( C \) is simply a \textit{cycle}; the fact that it is oriented is already implied by being a subgraph of \( D \).
However, when \( C \) is such that \( V(C) = \{ v_1, \ldots ,v_n \} \) and \( A(D) = \{ (v_1,v_2), \ldots ,(v_{n-1},v_n),(v_n,v_1) \} \) we say that it is a \textit{directed} cycle.

The \textit{interval function} \( I \colon \mathcal{P}(V(D)) \to \mathcal{P}(V(D)) \) satisfies that, for each vertex set \( S \subseteq V(D) \) with at least two elements, \( I(S) \) is the set of all vertices in an \( (u,v) \)-geodesic (\( u \) and \( v \) included), for every \( u,v \in S \); when \( S \) has less than two elements, we have \( I(S) = S \).
For every positive integer \( n \), we recursively define $I^0(S) = S$ and \( I^n(S) := I(I^{n-1}(S)) \).
A subset \( S\subseteq V(D) \) is \textit{convex} when \( I(S) = S \); if this happens, we say that \( S^{\complement} = V(D) \setminus S \) is \textit{co-convex}.
The \textit{convex hull} of \( S \) is the smallest convex set which contains \( S \) and is denoted by \( [S] \).
There are two interesting properties for this set.
One is that it is the intersection of all convex sets containing \( S \).
A noteworthy consequence of this fact is that if \( S \) does not intersect a given co-convex set, then its convex hull also does not intersect it.
The other is that it is obtained when we iterate the interval function on \( S \) until we reach a convex set, \( I^k(S) = [S] \).
Assuming that \( V(D) \) is finite, the convex hull for every subset of \( V(D) \) is well-defined.


If the convex hull of \( S \) is \( V(D) \), we say that \( S \) is a \textit{hull set} of \( D \).
When \( S \) is a hull set of minimum cardinality, the \textit{hull number} of \( D \) is defined as \( \ohn (D) = |S| \)~\cite{THNOAOG}.
Similarly, if \( I(S)=V(D) \) we say that \( S \) is a \textit{geodetic set} of \( D \).
When \( S \) is a geodetic set of minimum cardinality, the \textit{geodetic number} of \( D \) is defined as \( \ogn (D) = |S| \)~\cite{TGNOAOG}.
Notice that a geodetic set is also a hull set, therefore every assertion we make for all hull sets of a given oriented graph \( D \), in particular, is also valid for all geodetic sets.

Now that we have presented the main parameters of our research, we define a very important type of vertex.
It was first introduced in~\cite{THNOAG} for the undirected case, however we use the definitions given in~\cite{THNOAOG}.
A vertex \( v \in V(D) \) is called \textit{extreme} if it is of one of the three types below:
\begin{enumerate}[1.]
    \item Transmitter (source): \( d^-(v) = 0 \) and \( d^+(v) \geq 0 \);
    \item Receiver (sink): \( d^-(v) \geq 0 \) and \(d^+(v) = 0 \);
    \item Transitive: \( d^-(v) > 0 \), \( d^+(v) > 0 \) and \( (u,w) \in A(D) \) for every \( u \in N^-(v) \) and \( w \in N^+(v) \).
\end{enumerate}
We denote the set of extreme vertices of an oriented graph \( D \) as \( \ext (D) \).
For undirected graphs we have the \textit{simplicial} vertices, which are the ones with a clique for neighborhood; analogously, the set of simplicial vertices of a graph \( G \) is denoted by \( \ext (G) \).
What is so interesting about the extreme vertices is that they must be in every hull set of the oriented graph, as shown in~\cite{THNOAOG}.
There is a similar result for the simplicials in~\cite{OTCOTHNOAG}.

\section{Upper bound for the hull number}
\label{section.st}

Given an oriented graph \( D \), we already know that the extreme vertices must be in every hull set.
Sometimes they are sufficient, as we show in Section \ref{section.tc} for oriented trees, but that is not always the case.
Thus, we attempted to iteratively add non-extreme vertices to an initial set \( S_0 := \ext (D) \), until we reached a positive integer \( k \) such that \( S_k \) is a hull set of \( D \).

\begin{thm}\label{thm.bound23}
    For every oriented graph \( D \), we have: \[ \ohn (D) \leq |\ext (D)| + \frac{2}{3}|V(D) \setminus \ext (D)|.\]
\end{thm}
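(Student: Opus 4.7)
The plan is to construct a hull set $S$ by starting with $S := \ext(D)$ and enlarging it iteratively, at each step adding one or two non-extreme vertices in a way that guarantees the convex hull $[S]$ grows by a proportional amount. Over all iterations, the total number of non-extreme vertices inserted will be at most $\tfrac{2}{3}|X|$, where $X := V(D)\setminus\ext(D)$, which is exactly what the bound $|S|\leq|\ext(D)|+\tfrac{2}{3}|X|$ demands.

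First I would observe that any $v\in V(D)\setminus[S]$ is automatically non-extreme, since $\ext(D)\subseteq[S]$. Such a $v$ admits a witness $u\in N^-(v)$, $w\in N^+(v)$ with $(u,w)\notin A(D)$ (otherwise $v$ would be transitive and hence extreme), so that $u\to v\to w$ is a $(u,w)$-geodesic of length $2$. By convexity of $[S]$, if both $u$ and $w$ were in $[S]$ then $v$ would lie in $[S]$ too; hence at least one of $u,w$ lies outside $[S]$. This dichotomy drives the iterative step. In case (A), both $u,w\notin[S]$, and I add both to $S$; then $u,v,w$ all enter the new $[S]$, so the hull grows by at least $3$ for the cost of $2$ vertices added. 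In case (B), exactly one of $u,w$ (say $w$) is outside $[S]$; I add only $w$, and the geodesic $u\to v\to w$ then forces $v$ into the new $[S]$, so the hull grows by at least $2$ for the cost of $1$ vertex added. In either case the ratio of vertices inserted to vertices newly captured is at most $\tfrac{2}{3}$.

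Summing these inequalities over all iterations, the total number of non-extreme vertices eventually inserted into $S$ is at most $\tfrac{2}{3}(|V(D)|-|[\ext(D)]|)\leq\tfrac{2}{3}|X|$, yielding a hull set of the desired size. The main point requiring care, which the case distinction above handles directly, is to check that the vertices counted as ``newly captured'' in a step really were outside $[S]$ before the step; this is automatic from how $v$ and the witness are chosen. A secondary book-keeping obstacle is to make sure the two cases cover every situation that can arise while $[S]\neq V(D)$, which follows from the characterisation of extreme vertices recalled in Section~\ref{sec.pre} together with the convexity of $[S]$.
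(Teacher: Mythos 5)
Your proposal is correct and follows essentially the same route as the paper: start from $\ext(D)$, use non-extremality of a vertex outside the current hull to find an in-neighbour and out-neighbour forming a length-two geodesic through it, and then the same two-case accounting (add two vertices to capture at least three, or one vertex to capture at least two), giving the amortized $\tfrac{2}{3}$ bound on the non-extreme vertices added. No gaps worth noting.
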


\begin{proof}
    Let \( S_0 = \ext (D) \).
    We iteratively define \( S_i \) for \( i>0 \) such that \( S_{i-1} \subsetneq S_i \), \( [S_{i-1}] \subsetneq [S_i] \) and
    \begin{equation}\label{eq.bound23}
        |S_i| \leq |\ext (D)| + \frac{2}{3}|[S_i] \setminus \ext (D)|.
    \end{equation}
    Since we increase the cardinality of \( [S_i] \) at each step and \( D \) is finite, there is a positive integer \( k \) such that \( S_k \) is a geodetic set of \( D \) for which (\ref{eq.bound23}) holds.
    The upper bound for \( \ohn (D) \) is a direct consequence of (\ref{eq.bound23}) when \( i=k \) and the fact that \( \ohn (D) \leq |S_k| \).

    If $[S_0] = [\ext(D)] = V(D)$, then we have nothing to prove. So, assume that \( [S_{i-1}] \neq V(D) \) and choose \( v \in V(D) \setminus [S_{i-1}] \) arbitrarily.
    Since \( v \) is not extreme (because \( \ext (D) = S_0 \subset S_{i-1} \)), there are \( v_1,v_2 \in V(D) \) such that \( (v_1,v),(v,v_2) \in A(D) \) and \( (v_1,v_2) \notin A(D) \).
    This means that \( (v_1,v,v_2) \) is a geodesic in \( D \) and therefore \( v \in I( \{ v_1,v_2 \} ) \).
    Notice that \( v_1,v_2 \) cannot be both in \( [S_{i-1}] \), else we would have \( v \in [S_{i-1}] \).
    Thus at most one of them lies in \( [S_{i-1}] \).
    In case exactly one of them belongs to \([S_{i-1}]\), say \( v_1 \in [S_{i-1}] \) and \( v_2 \notin [S_{i-1}] \), we define \( S_i := S_{i-1} \cup \{ v_2 \} \), which implies \( [S_{i-1}] \cup \{ v_1,v \} \subset [S_i] \) and
    \begin{align*}
        |S_i| - |\ext (D)| &= |S_{i-1}| + 1 - |\ext (D)| \leq \frac{2}{3}|[S_{i-1}] \setminus \ext (D)| + 1\\
        &< \frac{2}{3}\left( |[S_{i-1}] \setminus \ext (D)| + 2 \right) \leq \frac{2}{3} |[S_{i-1}] \setminus \ext (D)|.
    \end{align*}
    If both \( v_1,v_2 \notin [S_{i-1}] \), we define \( S_i := S_{i-1} \cup \{ v_1,v_2 \} \), which implies \( [S_{i-1}] \cup \{ v_1,v_2,v \} \subset [S_i] \) and
    \begin{align*}
        |S_i| - |\ext (D)| &= |S_{i-1}| - |\ext (D)| + 2 \leq \frac{2}{3}|[S_{i-1}] \setminus \ext (D)| + 2\\
        &= \frac{2}{3}\left( |[S_{i-1}] \setminus \ext (D)| + 3 \right) \leq \frac{2}{3} |[S_{i-1}] \setminus \ext (D)|.
    \end{align*}
\end{proof}

In order to present our tight example, we need to introduce a product of oriented graphs.
The \textit{lexicographic product} of $D$ by $D'$, denoted by $D \circ D'$, is the oriented graph which satisfies $V(D \circ D') = V(D) \times V(D')$ and $((u_1,v_1),(u_2,v_2)) \in A(D \circ D')$ if, and only if, either $(u_1,u_2) \in A(D)$ or $u_1 = u_2$ and $(v_1,v_2) \in A(D')$.
In other words, for each vertex $v \in V(D)$ we take a copy of $D'$, namely $D'_v$, and if $(u,v) \in A(D)$, then we add the arcs $(u',v')$, for each $u' \in V(D'_u)$ and each $v' \in V(D'_v)$.

\begin{prop}
    \label{prop:tightexample_bounds}
    For any positive integer $k$, there exists a tournament $D$ without extreme vertices on $n=3k$ vertices such that $\ohn(D)=\frac{2}{3} n$.
\end{prop}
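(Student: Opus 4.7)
The construction will be $D := T_k \circ \overrightarrow{C_3}$, where $T_k$ is the transitive tournament on $k$ vertices and $\overrightarrow{C_3}$ is the directed $3$-cycle. Unrolling the definition of the lexicographic product, $D$ is a tournament on $3k$ vertices consisting of $k$ vertex-disjoint copies of $\overrightarrow{C_3}$ (call these \emph{fibers} $F_1,\dots,F_k$), with all arcs between $F_i$ and $F_j$ directed from $F_i$ to $F_j$ whenever $i<j$. I would first check that $D$ has no extreme vertices: every vertex has positive in- and out-degree (so no sources or sinks), and inside each $\overrightarrow{C_3}$ a vertex's unique cyclic in-neighbor $u$ and unique cyclic out-neighbor $w$ satisfy $(w,u)\in A(D)$, so no vertex is transitive.

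The upper bound $\ohn(D)\leq 2k$ is then immediate from Theorem~\ref{thm.bound23}, since $|\ext(D)|=0$ and $n(D)=3k$.

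The main content is the matching lower bound. My plan is to establish the following structural claim: any geodesic $P$ of length at least $2$ having an internal vertex in some fiber $F_i$ must have both endpoints in $F_i$, and is thus one of the three length-$2$ geodesics of the $\overrightarrow{C_3}$ carried by $F_i$. To prove this, note that every directed path in $D$ has non-decreasing fiber index (once a path leaves $F_i$ it cannot return), so the endpoints $u,w$ of $P$ lie in fibers $F_j,F_\ell$ with $j\leq i\leq\ell$. If $j<\ell$, then a direct cross-fiber arc gives $d(u,w)=1$, contradicting that $P$ has length at least $2$; hence $j=\ell=i$ and $P$ lies entirely inside the copy of $\overrightarrow{C_3}$ carried by $F_i$. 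From this claim, a straightforward induction on applications of $I$ shows that a hull set $S$ must contain at least two vertices of each fiber $F_i$, since otherwise $[S]\cap F_i$ gains no new vertex at any step and thus equals $S\cap F_i\neq F_i$. Summing over the $k$ fibers yields $|S|\geq 2k$.

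The main obstacle is the structural claim itself: the case analysis must carefully dismiss the ``mixed'' cases $j=i<\ell$ and $j<i=\ell$ as well as $j<i<\ell$, each of which admits a direct cross-fiber shortcut of length $1$ between $u$ and $w$ that beats any length-$\geq 2$ path through the fiber-$i$ vertex.
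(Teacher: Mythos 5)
Your proposal is correct and follows essentially the same route as the paper: the same construction $\overrightarrow{K_k}\circ\overrightarrow{C_3}$, the same key observation that fiber indices are non-decreasing along directed paths so every geodesic of length at least two stays inside a single copy of $\overrightarrow{C_3}$, and hence the hull computation decomposes fiberwise, forcing at least two vertices per fiber. The only cosmetic difference is that you obtain the upper bound by citing Theorem~\ref{thm.bound23}, whereas the paper gets $\ohn(D)=\sum_i\ohn(D_i)=2k$ directly from the decomposition; both are valid.
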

\begin{proof}
Let \( \overrightarrow{K_k} \) be a transitive tournament, i.e. an oriented \( K_k \) such that \( V(\overrightarrow{K_k}) = \{ 1, \ldots , k \} \) and \( A(\overrightarrow{K_k}) = \{ (i,j) \mid i,j \in V(\overrightarrow{K_k}) \textrm{ and } i<j \} \), and let \( \overrightarrow{C_3} \) be a directed \( C_3 \) with vertex set \( \{ u,v,w \} \).
Let \( D = \overrightarrow{K_k} \circ \overrightarrow{C_3} \) (see Figure~\ref{fig:proposition.tour} for an illustration of \( D \) when \( k=5 \)).

\begin{figure}[!htb]
    \begin{center}
        \begin{tikzpicture}
            \node[circle,fill=black,inner sep=0pt,minimum size=0.2cm, name=a3] at (-3,3.08) {};
            \node[circle,fill=black,inner sep=0pt,minimum size=0.2cm, name=a2] at (-4.62,1.90) {};
            \node[circle,fill=black,inner sep=0pt,minimum size=0.2cm, name=a4] at (-1.38,1.90) {};
            \node[circle,fill=black,inner sep=0pt,minimum size=0.2cm, name=a1] at (-4.02,0) {};
            \node[circle,fill=black,inner sep=0pt,minimum size=0.2cm, name=a5] at (-1.98,0) {};
            \node[circle,fill=black,inner sep=0pt,minimum size=0.2cm, name=d2] at (3,3.55) {};
            \node[circle,fill=black,inner sep=0pt,minimum size=0.2cm, name=b2] at (2.6,2.85) {};
            \node[circle,fill=black,inner sep=0pt,minimum size=0.2cm, name=e2] at (3.4,2.85) {};
            \node[circle,fill=black,inner sep=0pt,minimum size=0.2cm, name=d3] at (4.62,2.37) {};
            \node[circle,fill=black,inner sep=0pt,minimum size=0.2cm, name=b3] at (4.22,1.67) {};
            \node[circle,fill=black,inner sep=0pt,minimum size=0.2cm, name=e3] at (5.02,1.67) {};
            \node[circle,fill=black,inner sep=0pt,minimum size=0.2cm, name=d4] at (1.38,2.37) {};
            \node[circle,fill=black,inner sep=0pt,minimum size=0.2cm, name=b4] at (0.98,1.67) {};
            \node[circle,fill=black,inner sep=0pt,minimum size=0.2cm, name=e4] at (1.78,1.67) {};
            \node[circle,fill=black,inner sep=0pt,minimum size=0.2cm, name=d5] at (4.02,0.47) {};
            \node[circle,fill=black,inner sep=0pt,minimum size=0.2cm, name=b5] at (3.62,-0.23) {};
            \node[circle,fill=black,inner sep=0pt,minimum size=0.2cm, name=e5] at (4.42,-0.23) {};
            \node[circle,fill=black,inner sep=0pt,minimum size=0.2cm, name=d1] at (1.98,0.47) {};
            \node[circle,fill=black,inner sep=0pt,minimum size=0.2cm, name=b1] at (1.58,-0.23) {};
            \node[circle,fill=black,inner sep=0pt,minimum size=0.2cm, name=e1] at (2.38,-0.23) {};
            \draw (3,3.08) node [thick, minimum size=1.5cm, draw, circle, name=c3] {};
            \draw (1.38,1.90) node [thick, minimum size=1.5cm, draw, circle, name=c2] {};
            \draw (4.62,1.90) node [thick, minimum size=1.5cm, draw, circle, name=c4] {};
            \draw (1.98,0) node [thick, minimum size=1.5cm, draw, circle, name=c1] {};
            \draw (4.02,0) node [thick, minimum size=1.5cm, draw, circle, name=c5] {};
            \draw[black, thick, ->, >=stealth] (a1) -- (a2);
            \draw[black, thick, ->, >=stealth] (a1) -- (a3);
            \draw[black, thick, ->, >=stealth] (a1) -- (a4);
            \draw[black, thick, ->, >=stealth] (a1) -- (a5);
            \draw[black, thick, ->, >=stealth] (a2) -- (a3);
            \draw[black, thick, ->, >=stealth] (a2) -- (a4);
            \draw[black, thick, ->, >=stealth] (a2) -- (a5);
            \draw[black, thick, ->, >=stealth] (a3) -- (a4);
            \draw[black, thick, ->, >=stealth] (a3) -- (a5);
            \draw[black, thick, ->, >=stealth] (a4) -- (a5);
            \draw[black, thick, ->, >=stealth] (c1) -- (c2);
            \draw[black, thick, ->, >=stealth] (c1) -- (c3);
            \draw[black, thick, ->, >=stealth] (c1) -- (c4);
            \draw[black, thick, ->, >=stealth] (c1) -- (c5);
            \draw[black, thick, ->, >=stealth] (c2) -- (c3);
            \draw[black, thick, ->, >=stealth] (c2) -- (c4);
            \draw[black, thick, ->, >=stealth] (c2) -- (c5);
            \draw[black, thick, ->, >=stealth] (c3) -- (c4);
            \draw[black, thick, ->, >=stealth] (c3) -- (c5);
            \draw[black, thick, ->, >=stealth] (c4) -- (c5);
            \draw[black, thick, ->, >=stealth] (b1) -- (d1);
            \draw[black, thick, ->, >=stealth] (d1) -- (e1);
            \draw[black, thick, ->, >=stealth] (e1) -- (b1);
            \draw[black, thick, ->, >=stealth] (b2) -- (d2);
            \draw[black, thick, ->, >=stealth] (d2) -- (e2);
            \draw[black, thick, ->, >=stealth] (e2) -- (b2);
            \draw[black, thick, ->, >=stealth] (b3) -- (d3);
            \draw[black, thick, ->, >=stealth] (d3) -- (e3);
            \draw[black, thick, ->, >=stealth] (e3) -- (b3);
            \draw[black, thick, ->, >=stealth] (b4) -- (d4);
            \draw[black, thick, ->, >=stealth] (d4) -- (e4);
            \draw[black, thick, ->, >=stealth] (e4) -- (b4);
            \draw[black, thick, ->, >=stealth] (b5) -- (d5);
            \draw[black, thick, ->, >=stealth] (d5) -- (e5);
            \draw[black, thick, ->, >=stealth] (e5) -- (b5);
            \draw[black, thick, ->] (-0.7,1.54) -- (0.2,1.54);
        \end{tikzpicture}
    \end{center}
    \caption{Tight example to Theorem~\ref{thm.bound23} assuming \( k=5 \).}
    \label{fig:proposition.tour}
\end{figure}
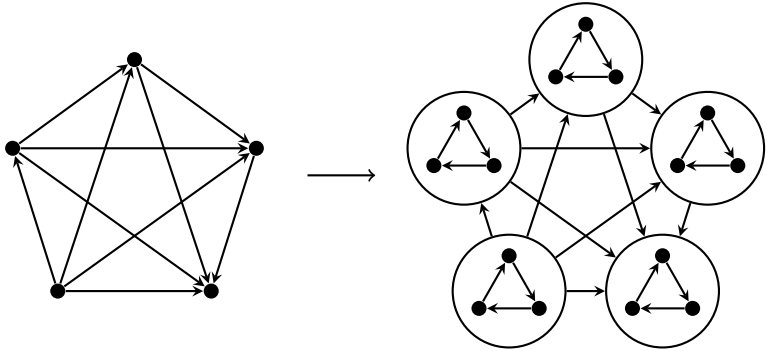

Take \( (i,u),(j,v) \in V(D) \) such that \( i<j \), then \( ((i,u),(j,v)) \in A(D) \).
Define \( D_i \), for every \( i \in [k] \), as the copy of \( \overrightarrow{C_3} \) in \( D \) corresponding to the vertex \( i \) of \( \overrightarrow{K_k} \).
The existence of a \( ((j,v),(i,u)) \)-path implies that there are \( (i',u'),(j',v') \in V(D) \) such that \( i'<j' \) and \( ((j',v'),(i',u')) \in A(D) \), thus contradicting the definition of \( \overrightarrow{K_k} \).
Moreover, that eliminates the possibility of a \( ((i,u),(i,v)) \)-path that is not contained in \( D_i \).
From this we conclude that for every \( S \subseteq V(D) \) we have \( I(S) = \bigcup_{i=1}^k I(S \cap V(D_i)) \) and \( I(S \cap V(D_i)) \subseteq V(D_i) \) for every \( i \in [k] \).
Therefore, \( [S] = V(D) \) if and only if \( [S \cap V(D_i)] = V(D_i) \) for every \( i \in [k] \), which means that \( \ohn (D) = \sum_{i=1}^k \ohn (D_i) = k\cdot \ohn (\overrightarrow{C_3}) =2k= \frac{2}{3} n(D) \).
\end{proof}

\section{Oriented Hull Number in Partial Cubes}
\label{section.bip}

In this section we prove that, given an oriented partial $D$ and a positive integer $k$, determining whether $\ohn (D) \leq k$ is an \NP-complete problem.

In the undirected case, it was proven that determining $\hn(G)$ is an \NP-hard problem, even if $G$ is a partial cube~\cite{CIPCTHN}.
Recall that if arcs in both ways were allowed, then this result would imply the \NP-hardness on the oriented case.
As we consider only oriented graphs, we first prove that replacing each edge with a directed $C_4$ has roughly the same effect in the class of bipartite graphs.

Given a (non-oriented) bipartite graph \( G \) with vertex set \( V(G) = \{ v_1, \ldots ,v_n \} \), let \( G_{\oracf} \) be the oriented bipartite graph such that
\( V(G_{\oracf}) = V(G) \cup \{ v_{i,j},v_{j,i} \mid v_iv_j \in E(G) \}\) and \[A(G_{\oracf}) = \{ (v_i,v_{i,j}),(v_{i,j},v_j),(v_j,v_{j,i}),(v_{j,i},v_i) \mid v_iv_j \in E(G) \}. \]
Thus, \(G_{\oracf}\) is obtained from $G$ by replacing each edge by a directed \( C_4 \).

The first important detail about this procedure is that it ``doubles'' the length of each path in \( G \).
In other words, if we have a path \( P =~v_{i_0}, v_{i_1}, \ldots ,v_{i_k}\) in \( G \), then \(G_{\oracf}\) has the directed paths:
\(P_1 = (v_{i_0}, v_{i_0,i_1}, v_{i_1}, \ldots ,v_{i_{k-1},i_k}, v_{i_k})\) and
$P_2 = (v_{i_k}, v_{i_k,i_{k-1}}, v_{i_{k-1}}, \ldots ,v_{i_1,i_0},v_{i_0}) $. We say that these three paths $P$, $P_1$ and $P_2$ are \textit{corresponding paths}.
Notice that for each \( v_iv_j \)-path of length \( d \) in \( G \) we have corresponding \( (v_i,v_j) \)-path and \( (v_j,v_i) \)-path in \( G_{\oracf} \), both of length \( 2d \).
Which means that \( 2d_G(v_i,v_j) = d_{G_{\oracf}}(v_i,v_j) = d_{G_{\oracf}}(v_j,v_i) \) for all \( v_i,v_j \in V(G) \).
Consequently, the corresponding paths of a geodesic in \( G \) are also geodesics in \( G_{\oracf} \).


Note that in Lemma~\ref{lemma.upboundGCfour} we do not require $G$ to be bipartite.

\begin{lem}
    \label{lemma.upboundGCfour}
    For any graph $G$, \( \ohn (G_{\oracf}) \leq \hn (G) \).
\end{lem}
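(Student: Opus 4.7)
The plan is to show that any minimum hull set of the undirected graph $G$, viewed as a subset of $V(G) \subseteq V(G_{\oracf})$, remains a hull set of $G_{\oracf}$. Fix such an $S$ with $|S| = \hn(G)$; it suffices to prove $[S] = V(G_{\oracf})$, where here and below $[\cdot]$ and the interval function $I(\cdot)$ refer to $G_{\oracf}$ (I write $[\cdot]_G$ and $I_G(\cdot)$ for their undirected analogues in $G$).

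First I would show, by induction on $n$, that $I_G^n(S) \subseteq I^n(S)$. The base case $n = 0$ is trivial. For the inductive step, pick $v_k \in I_G^n(S)$; either $v_k \in I_G^{n-1}(S)$ and we apply the induction hypothesis, or there exist $v_i, v_j \in I_G^{n-1}(S) \subseteq V(G)$ together with a $v_iv_j$-geodesic $P$ of $G$ through $v_k$. In the latter case, by induction $v_i, v_j \in I^{n-1}(S)$, and the forward copy $P_1$ of $P$ (as defined in the paragraphs preceding the lemma) is a $(v_i, v_j)$-geodesic of $G_{\oracf}$ of length $2 d_G(v_i, v_j) = d_{G_{\oracf}}(v_i, v_j)$ that contains $v_k$. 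Hence $v_k \in I(I^{n-1}(S)) = I^n(S)$. Iterating until stability gives $V(G) = [S]_G \subseteq [S]$.

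The second step absorbs all subdivision vertices. For each edge $v_iv_j \in E(G)$, both endpoints are already in $[S]$ by the first step. The directed $4$-cycle replacing $v_iv_j$ in $G_{\oracf}$ makes $(v_i, v_{i,j}, v_j)$ a $(v_i,v_j)$-geodesic of length $2 = d_{G_{\oracf}}(v_i, v_j)$, so $v_{i,j} \in I(\{v_i, v_j\}) \subseteq [S]$; symmetrically, $v_{j,i}$ lies on the $(v_j, v_i)$-geodesic $(v_j, v_{j,i}, v_i)$ and so belongs to $[S]$. Since $V(G_{\oracf})$ is exhausted by $V(G)$ together with all such subdivision vertices, $[S] = V(G_{\oracf})$, giving $\ohn(G_{\oracf}) \leq |S| = \hn(G)$.

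The main technical point I anticipate is ensuring that no geodesic of $G_{\oracf}$ between original vertices can ``shortcut'' outside the lifted geodesics of $G$, but this is already handled by the correspondence established above the lemma: every $v_iv_j$-path in $G_{\oracf}$ alternates between original and subdivision vertices, which forces $d_{G_{\oracf}}(v_i, v_j) = 2 d_G(v_i, v_j)$ and makes the forward and backward lifts of any $v_iv_j$-geodesic of $G$ automatically geodesics of $G_{\oracf}$. Once this is in hand, both stages of the argument are routine, and no genuine obstruction remains.
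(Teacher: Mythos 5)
Your proposal is correct and follows essentially the same route as the paper: both arguments use the distance-doubling correspondence ($d_{G_{\oracf}}(v_i,v_j)=2d_G(v_i,v_j)$) to show that the hull of $S$ in $G_{\oracf}$ absorbs all of $V(G)$, and then one further application of the interval function picks up every subdivision vertex $v_{i,j}$ via the length-two geodesic $(v_i,v_{i,j},v_j)$. Your write-up is, if anything, a bit more explicit in the induction than the paper's.
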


\begin{proof}
    We prove that if $S$ is a hull set of \( G \), then $S$ is a hull set of \( G_{\oracf} \).
    Take a hull set \( S \) of \( G \).
    Since \( S \subseteq V(G) \), every geodesic considered in order to obtain \( I_{G_{\oracf}}(S) \) has a corresponding geodesic in \( G \), and the same is valid for \( I_G(S) \).
    We then have that \( I_{G} (S) = I_{G_{\oracf}} (S) \cap V(G) \).
    Thus, since there are vertices \( v_{i,j} \) in \( I_{G_{\oracf}} (S) \) we can state that \( I_{G}^n (S) \subseteq I_{G_{\oracf}}^n (S) \cap V(G) \) for every \( n \geq 2 \).
    We know that \( [S]_G \subseteq I_{G_{\oracf}}^n (S) \) for some natural \( n \) and that every \( v_{i,j} \) lies in the \( (v_i,v_j) \)-geodesic.
    Therefore, \( I_{G_{\oracf}}^{n+1} (S) \supseteq V(G_{\oracf}) \).
\end{proof}

Next we show the ``converse'' of the above statement: from any hull set of \( G_{\oracf} \) we can obtain a hull set of \( G \) with at most the same cardinality.
However, this fact is only valid for bipartite graphs.

\begin{lem}\label{lem.hngleqoracf}
    If \( G \) is a connected bipartite graph, then \( \hn (G) \leq \ohn (G_{\oracf}) \).
\end{lem}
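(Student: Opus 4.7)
The plan is to show that from any hull set $S'$ of $G_{\oracf}$ one can extract a hull set of $G$ of cardinality at most $|S'|$. I would proceed by induction on the number $|S'\setminus V(G)|$ of subdivision vertices in $S'$. The base case $S'\subseteq V(G)$ is immediate from the correspondence between $V(G)$-to-$V(G)$ geodesics in $G_{\oracf}$ and $G$-geodesics noted just before Lemma~\ref{lemma.upboundGCfour}: such an $S'$ is already a hull set of $G$.

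The technical backbone is a structural description of non-trivial convex sets of $G_{\oracf}$. For any convex $D\subseteq V(G_{\oracf})$ with $|D|\geq 2$: (i) if $v_{i,j}\in D$ then $v_i,v_j\in D$, because $v_i$ and $v_j$ are respectively the unique in- and out-neighbors of $v_{i,j}$ and therefore lie on every directed geodesic through $v_{i,j}$; and (ii) if $v_i,v_j\in D\cap V(G)$ with $v_iv_j\in E(G)$, then the length-$2$ directed paths $v_i\to v_{i,j}\to v_j$ and $v_j\to v_{j,i}\to v_i$ are geodesics in $G_{\oracf}$, forcing $v_{i,j},v_{j,i}\in D$. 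Hence $D$ is determined by $D\cap V(G)$, which is convex in $G$. Writing $S_0:=(S'\cap V(G))\cup\{v_i,v_j\colon v_{i,j}\in S'\}$, one obtains $[S']_{G_{\oracf}}\cap V(G)=[S_0]_G$, so $S'$ is a hull set of $G_{\oracf}$ if and only if $S_0$ is a hull set of $G$.

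For the inductive step, pick $v_{i,j}\in S'\setminus V(G)$ and consider the two candidate replacements $S'':=(S'\setminus\{v_{i,j}\})\cup\{v_i\}$ and $S''':=(S'\setminus\{v_{i,j}\})\cup\{v_j\}$; both have size at most $|S'|$ and contain strictly fewer subdivision vertices, so by induction it suffices to show that one of them is still a hull set of $G_{\oracf}$. By the correspondence this reduces to checking whether the associated $G$-set (which is $S_0$, $S_0\setminus\{v_j\}$, or $S_0\setminus\{v_i\}$, depending on what is lost) remains a hull set of $G$. If $v_i$ or $v_j$ already belongs to $S'\cap V(G)$, or is an endpoint of some other subdivision vertex of $S'$, then the relevant $S_0$ is left unchanged and the replacement trivially works.

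The main obstacle is the remaining case, in which $v_i$ and $v_j$ are both ``exclusively supplied'' to $S_0$ by $v_{i,j}$. The two candidates then produce $S_0\setminus\{v_j\}$ and $S_0\setminus\{v_i\}$, and one must show that at least one is still a hull set of $G$, i.e., that $v_i$ and $v_j$ cannot simultaneously be essential in $S_0$. This is where bipartiteness of $G$ must be exploited: the expected route is to analyze the co-convex set $V(G)\setminus[S_0\setminus\{v_i,v_j\}]_G$ and use the absence of odd cycles in $G$ to rule out a rigidly symmetric position of the adjacent pair $v_iv_j$ inside it. This is the only step where bipartiteness is invoked, in line with the authors' observation that the bound fails in general: triangles produce directed shortcuts in $G_{\oracf}$ that can make $\ohn(G_{\oracf})$ strictly smaller than $\hn(G)$.
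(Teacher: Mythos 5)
Your setup is sound and genuinely different from the paper's: the characterization of convex sets of \(G_{\oracf}\) with at least two vertices (claims (i) and (ii), which hold because \(G_{\oracf}\) is strongly connected and \(d_{G_{\oracf}}(v_i,v_j)=2d_G(v_i,v_j)\)) does give \([S']_{G_{\oracf}}\cap V(G)=[S_0]_G\), so hull sets of \(G_{\oracf}\) correspond to hull sets of \(G\) after projecting each \(v_{i,j}\) to \(\{v_i,v_j\}\), and your "trivial" replacement sub-cases are fine. The paper instead manipulates geodesics in \(G_{\oracf}\) directly, so up to this point your framing is a legitimate (and arguably cleaner) alternative. The problem is that the entire content of the lemma sits in your remaining case, and there you prove nothing: the assertion that \(v_i\) and \(v_j\) "cannot simultaneously be essential in \(S_0\)" is exactly what must be established, and the sentence about analyzing the co-convex set \(V(G)\setminus[S_0\setminus\{v_i,v_j\}]_G\) and "ruling out a rigidly symmetric position" is a hope, not an argument. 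This is precisely the only place bipartiteness enters, i.e., the heart of the lemma, so the proposal has a genuine gap.

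Moreover, the clean statement you would like ("in a connected bipartite graph no hull set contains an adjacent pair of vertices that are both essential") is false as stated: in \(K_2\) the unique hull set consists of two adjacent vertices, both essential. So any correct argument must exploit the provenance of \(S_0\), namely that \(v_i,v_j\) are supplied only by \(v_{i,j}\) and that \(S'\) contains further vertices whose geodesics to and from \(v_{i,j}\) avoid wrapping around its \(4\)-cycle; that is a nontrivial geodesic/parity analysis. It is telling that the paper, in the corresponding hard case, does not drop one of \(v_i,v_j\) at all: it first shows (using the bipartiteness of \(G\) via an odd-cycle contradiction) that the other terminal \(w\in S\) must itself be a subdivision vertex \(v_{k,l}\) on a directed \(C_4\), and then exchanges the two subdivision vertices \(v_{i,j},v_{k,l}\) simultaneously for \(v_i,v_k\), verifying that the new pair regenerates both removed vertices in \(I(S_d)\). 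So your single-replacement strategy is not known to work even after the (missing) analysis, and as it stands the inductive step is unproven exactly where the difficulty lies.
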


\begin{proof}
    Let \( S \) be a hull set of \( G_{\oracf} \).
    We are going to construct another hull set \( S' \) of \( G_{\oracf} \) such that \( |S'| \leq |S| \) and \( S' \subseteq V(G) \).
    Because of the corresponding paths, we deduce that \( S' \) is a hull set of \( G \) and that the proposition holds.

    If \( S \subseteq V(G) \), then take \( S' = S \).
    Otherwise, denote \( S_0 := S \).
    We inductively construct \( S_d \) from \( S_{d-1} \), for \( d > 0 \), by replacing a vertex in \( S_{d-1} \setminus V(G) \) with one in \( V(G) \), preserving the hull set property.

    Let \( v_{i,j} \in S_{d-1} \setminus V(G) \).
    If there is a \( w \in S_{d-1} \setminus \{ v_{i,j} \} \) with some \( (v_{i,j},w) \)-geodesic \( \overrightarrow{P} = (v_{i,j},v_j,v_{j,i},v_i, \ldots ,w) \), notice that there is a \( (v_j,w) \)-geodesic containing \( v_i \) in \( G \).
    Defining \( S_d := (S_{d-1} \setminus \{ v_{i,j} \} ) \cup \{ v_j \} \) gives us \( v_i,v_j \in I(S) \), which implies \( v_{i,j} \in I^2(S) \).
    Thus \( S_{d-1} \subseteq I^2(S_d) \) and, consequently, \( V(G_{\oracf} ) \subseteq [S_{d-1}] \subseteq [S_d] \).
    If we have a \( (w,v_{i,j}) \)-geodesic \( ( w, \ldots ,v_j,v_{j,i},v_i,v_{i,j} ) \), we analogously take \( S_d = (S_{d-1} \setminus \{ v_{i,j} \}) \cup \{ v_i \} \).

    In case such a vertex does not exist in \( S_{d-1} \), then for every \( w \in S_{d-1} \setminus \{ v_{i,j} \} \) the \( (v_{i,j},w) \)-geodesics do not contain \( v_i \) and the \( (w,v_{i,j}) \)-geodesics do not contain \( v_j \).
    Take a vertex \( w \in S_{d-1} \setminus \{ v_{i,j} \} \) and a directed cycle \( C = G_{\oracf} [ \{ v_k,v_{k,l},v_l,v_{l,k} \} ] \) such that \( w \in V(C) \).
    Let \( P_1 \) and \( P_2 \) respectively be a \( (v_{i,j},w) \)-geodesic and a \( (w,v_{i,j}) \)-geodesic.
    We first prove by contradiction that \(  V(P_1) \cap V(P_2) = \{ v_{i,j},w \} \).
    Let \( w' \in V(P_1) \cap V(P_2) \setminus \{ v_{i,j},w \} \) so that the value of \( \min(v_{i,j},w') := \min \{ d(w',v_{i,j}),\) \( d(v_{i,j},w') \} \) is as small as possible.
    If there are \( r,s \) such that \( w' = v_{r,s} \) then we must also have \( v_r,v_s \in V(P_1) \cap V(P_2) \).
    Notice that the \( (v_{i,j},v_r) \)-path contained in \( P_1 \) and the \( (v_s,v_{i,j}) \)-path contained in \( P_2 \) are both geodesics, from where we get \( d(v_{i,j},v_r) = d(v_{i,j},w') - 1 \) and \( d(v_s,v_{i,j}) = d(w',v_{i,j}) - 1 \).
    Thus we have that \( \min (v_{i,j},v_r) \) or \( \min (v_{i,j},v_s) \) is smaller than \( \min (v_{i,j},w') \), contradicting the choice of \( w' \).
    Then we must have \( w' \in V(G) \).

    Observe that the \( (v_j,w') \)-geodesic \( P_1' \) contained in \( P_1 \) has an even number of arcs, \( q_1 \) of the form \( (v_r,v_{r,s}) \) and \( q_1 \) of the form \( (v_{r,s},v_s) \).
    The analogous is true for the \( (w',v_i) \)-geodesic \( P_2' \) contained in \( P_2 \), with length \( 2q_2 \).
    Now, take the \( (w',v_j) \)-path \( P \) such that \( V(P) = (V(P_1') \cap V(G) ) \cup \{ v_{s,r} \mid v_{r,s} \in V(P_1') \} \) and \( A(P) = \{ (v_s,v_{s,r}),(v_{s,r},v_r) \mid (v_r,v_{r,s}),(v_{r,s},v_s) \in A(P_1') \} \).
    Notice that we can extend \( P \) to obtain a \( (w',v_i) \)-path of length \( 2q_1 +2 \) containing \( v_j \), which is not a geodesic due to the case we are working on.
    Thus \( 2q_1+2 > 2q_2 \Rightarrow q_1 \geq q_2 \); analogously \( q_2 \geq q_1 \), giving us \( q_1 = q_2 \).
    We then have that \( P_1' \), \( P_2' \) and the arcs \( (v_i,v_{i,j}),(v_{i,j},v_j) \) together form a directed cycle of length \( 2q_1 + 2q_2 +2 = 2(2q_1 + 1) \).
    In \( G \) there is a corresponding undirected cycle with length \( 2q_1 + 1 \), contradicting the fact that \( G \) is bipartite.
    Since the existence of \( w' \) always results in a contradiction we must have \( V(P_1) \cap V(P_2) = \{ v_{i,j},w \} \).

    By the analysis made in the previous paragraph we cannot have \( w \in \{ v_k,v_l \} \).
    Thus assume without loss of generality that \( w = v_{k,l} \).
    Remember that we are working on the case in which there is no \( (w,v_{i,j}) \)-geodesic and no \( (v_{i,j},w) \)-geodesic containing both \( v_k,v_l \).
    Next we show that we can define \( S_d := ( S_{d-1} \setminus \{ v_{i,j},w \} ) \cup \{ v_i,v_k \} \).

    Let us analyse some \( (v_i,v_k) \)-geodesic \( P_3 \), which uses neither \( v_j \) nor \( v_l \) and has even length \( A(P_3) =: 2p \).
    Let \( P_1'' \) be the \( (v_j,v_k) \)-geodesic contained in \( P_1 \), which also has even length; notice that \( \big| |A(P_3)| - |A(P_1'')| \big| \) must be even.
    If \( |A(P_3)| = |A(P_1'')| \) we would have a directed cycle of length \( 4p + 2 \), which has a corresponding cycle in \( G \) with odd length.
    If \( |A(P_3)| \leq |A(P_1'')| - 2 \) then we could take a path \( P_3' = (v_{i,j},v_j,v_{j,i},v_i, \ldots ,v_k,v_{k,l}) \) containing \( P_3 \) with length \( |A(P_3')| = |A(P_3)| + 4 \leq |A(P_1'')| + 2 = |A(P_1)| \).
    Consequently it would be a \( (v_{i,j},v_{k,l}) \)-geodesic containing both \( v_i \) and \( v_j \), thus contradicting the hypothesis for this case.
    Therefore \( |A(P_3)| \geq |A(P_1'')| + 2 \).
    The argument is analogous for the \( (v_j,v_l) \)-geodesics not containing \( v_i,v_k \).
    Thus the \( (v_i,v_k) \)-path \( P_1' \) with \( V(P_1') = V(P_1'') \cup \{ v_i,v_{i,j} \} \) and \( A(P_1') = A(P_1'') \cup \{ (v_i,v_{i,j}),(v_{i,j},v_j) \} \) is a geodesic.
    Defining the \( (v_k,v_i) \)-path \( P_2' \) analogously we conclude that it is also a geodesic.
    We then have that denoting \( S_d := (S_{d-1} \setminus \{ v_{i,j},v_{k,l} \} ) \cup \{ v_i,v_k \} \) gives us \( v_{i,j},v_{k,l} \in V(P_1') \cup V(P_2') \subset I(S_d) \), which means that \( S_{d-1} \subset I(S_d) \) and consequently \( V(D) \subseteq [S_{d-1}] \subseteq [S_d] \).

    Following these steps inductively we obtain a hull set \( S' \subset V(G) \) of \( G_{\oracf} \).
    Notice that there might be \( v_{i,j},v_{k,l} \in S_d \) with \( \{ i,j \} \cap \{ k,l \} \neq \emptyset \) and that they were both exchanged for the same vertex of \( V(G) \).
    Now we only have left to prove that \( S' \) is also a hull set of \( G \).
    We show by induction on \( k \) that \( I_G^k(S') = I_{G_{\oracf}}^k (S') \cap V(G) \).

    Since \( S' \subset V(G) \), for all pairs of vertices \( v_i,v_j \in S' \) every \( (v_i,v_j) \)-geodesic has its corresponding \( v_iv_j \)-geodesic in \( G \).
    Thus it is easy to see that for \( k = 1 \) the claim follows.
    Now assume that it is true for \( k-1 \).
    Seen as \( I_G^{k-1}(S') = I_{G_{\oracf}}^{k-1}(S') \cap V(G) \), by the same argument used above it follows that \( I_G^k(S') = I( I_{G_{\oracf}}^{k-1}(S') \cap V(G) ) \cap V(G) \subseteq I_{G_{\oracf}}^k(S') \cap V(G) \).
    Next we show that \( I_{G_{\oracf}}^k(S') \cap V(G) \subseteq I_G^k(S') \).

    Taking \( w \in \left( I_{G_{\oracf}}^k(S') \cap V(G) \right) \setminus I_{G_{\oracf}}^{k-1}(S') \), we know that there are \( w_1,w_2 \in I_{G_{\oracf}}^{k-1}(S') \) such that \( w \) lies in some \( (w_1,w_2) \)-geodesic.
    For each \( \ell \in \{ 1,2 \} \), if \( w_\ell \) is a vertex of \( G \), we define \( w_\ell' := w_\ell \).
    Else suppose that there are distinct \( i_\ell,j_\ell \in \{ 1, \ldots ,n(G) \} \) such that \( w_\ell = v_{i_\ell,j_\ell} \).
    Since \( S' \subset V(G) \) let \( k' \in [k-1] \) be such that \( w_\ell \notin I_{G_{\oracf}}^{k'-1}(S') \) and \( w_\ell \in I_{G_{\oracf}}^{k'}(S') \).
    Thus there are \( w_{2\ell +1},w_{2 \ell + 2} \in I_{G_{\oracf}}^{k'-1}(S') \) such that \( w_\ell \) is interior to some \( (w_{2 \ell + 1},w_{2 \ell + 2}) \)-geodesic.
    We know that both indegree and outdegree of \( w_\ell \) are equal to one, which means that \( v_{i_\ell},v_{j_\ell} \in I_{G_{\oracf}}^{k'}(S') \).
    In this case we define \( w_1' := v_{j_1} \) and \( w_2' := v_{i_2} \).
    In any event we have a \( (w_1',w_2') \)-geodesic contained in the \( (w_1,w_2) \)-geodesic mentioned before.
    It is easy to see now that \( w \in I_G^k(S') \).
    This concludes our demonstration.
\end{proof}

By Lemmas~\ref{lemma.upboundGCfour} and~\ref{lem.hngleqoracf}, we deduce:

\begin{thm}
    \label{thm.npcbip}
    If $G$ is bipartite, then $\hn (G) = \ohn (G_{\oracf})$.
\end{thm}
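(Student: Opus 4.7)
The plan is to simply combine the two preceding lemmas. Lemma~\ref{lemma.upboundGCfour} gives $\ohn(G_{\oracf})\le \hn(G)$ for every graph $G$, and Lemma~\ref{lem.hngleqoracf} gives the reverse inequality $\hn(G)\le \ohn(G_{\oracf})$ whenever $G$ is a connected bipartite graph. Chaining these two inequalities yields the equality in the connected case.

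The only delicate point is that the statement of the theorem does not assume connectivity, whereas Lemma~\ref{lem.hngleqoracf} does. To handle this, I would note that both parameters are additive over connected components. Indeed, if $G$ has connected components $G_1,\dots,G_m$, then the construction $G\mapsto G_{\oracf}$ acts component-wise, so $G_{\oracf}$ has components $(G_1)_{\oracf},\dots,(G_m)_{\oracf}$. Since no geodesic (directed or undirected) crosses between distinct components, a (hull) set is a hull set of the whole graph if and only if its restriction to each component is a hull set of that component, giving
\[
\hn(G)=\sum_{i=1}^m \hn(G_i),\qquad \ohn(G_{\oracf})=\sum_{i=1}^m \ohn((G_i)_{\oracf}).
\]
Applying the connected case to each $G_i$ and summing then yields $\hn(G)=\ohn(G_{\oracf})$ for arbitrary bipartite $G$.

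There is essentially no obstacle here: all the real work has been absorbed into the two lemmas. The only sanity check worth making is that each $G_i$ is itself a connected bipartite graph (immediate, since bipartiteness is inherited by subgraphs), so Lemma~\ref{lem.hngleqoracf} applies to each component.
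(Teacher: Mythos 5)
Your proof is correct and takes essentially the same route as the paper, which deduces the theorem directly by chaining Lemma~\ref{lemma.upboundGCfour} with Lemma~\ref{lem.hngleqoracf}. Your additional component-wise additivity argument properly handles the disconnected case, a point the paper leaves implicit (its subsequent application is to partial cubes, which are connected).
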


To emphasize the necessity of \( D \) being bipartite in Lemma~\ref{lem.hngleqoracf}, we show a non-bipartite graph that violates the stated inequality.
Let \( G = C_3 \) with vertices \( v_1,v_2,v_3 \).
Since it is a complete graph, all its vertices are simplicial and therefore \( \hn (G) = 3 \).
Now considering \( G_{\oracf} \), notice that \( (v_{1,2},v_2,v_{2,3},v_3) \) and \( (v_3,v_{3,1},v_1,v_{1,2}) \) are both geodesics, thus \( v_1,v_2,v_3 \in I( \{ v_{1,2},v_3 \} ) \).
We then conclude that \( I^2( \{ v_{1,2},v_3 \} ) = V(G_{\oracf}) \), hence \( \ohn (G_{\oracf}) = 2 \).

It was proven in~\cite{OTHNOSGC} that determining whether \( \hn (G) \leq k \) is \NP-complete for a bipartite graph \( G \).
Thus, one can combine this result with Theorem~\ref{thm.npcbip} to deduce that, given an oriented bipartite graph $D$ and a positive integer $k$, deciding whether $\ohn (D) \leq k$ is an \NP-complete problem.
Moreover, one can observe that such reduction can also be applied to partial cubes, a subclass of bipartite graphs which we define in the sequel.


The \textit{hypercube graph of dimension n}, \( Q_n \), is a (undirected) graph such that its vertex set is \( V(Q_n) = \{ 0,1 \}^n \).
We express each vertex as \( v = (v^1, \ldots ,v^n) \), where \( v^i \in \{ 0,1 \} \) for every \( i \in \{ 1, \ldots ,n \} \).
The edge set of $Q_n$ is \( E(Q_n) = \{ uv \mid \) there is exactly one \( i \in \{ 1, \ldots ,n \} \) such that \( u^i \neq v^i \} \).
A \textit{partial cube} graph \( G \) is an isometric subgraph of some \( Q_n \), meaning that \( d_G(u,v) = d_{Q_n}(u,v) \) for every \( u,v \in V(G) \).
Moreover, since hypercubes are connected then the distance between each pair of vertices is positive and finite, hence partial cubes are also connected.

\begin{prop}
    \label{prop.partialcube}
    If $G$ is a partial cube, then $G_{\oracf}$ is an oriented partial cube.
\end{prop}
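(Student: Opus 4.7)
The plan is to lift an isometric embedding $\phi \colon V(G) \to \{0,1\}^n$ of $G$ into an isometric embedding $\psi$ of the underlying graph $H$ of $G_{\oracf}$ into a higher-dimensional hypercube. The natural candidate is to \emph{duplicate the coordinates}: set $\psi(v) = (\phi(v), \phi(v)) \in \{0,1\}^{2n}$ for each $v \in V(G)$, and for every edge $e = v_iv_j$ place the two middle vertices $v_{i,j}$ and $v_{j,i}$ at the two common neighbours of $\psi(v_i), \psi(v_j)$ in $Q_{2n}$, namely $(\phi(v_j),\phi(v_i))$ and $(\phi(v_i),\phi(v_j))$. This automatically embeds each 4-cycle induced by an edge of $G$ as a face $Q_2$ of $Q_{2n}$.

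Verifying isometry would then split into three cases. For pairs of original vertices, $d_{Q_{2n}}(\psi(v_i),\psi(v_j)) = 2 d_G(v_i,v_j)$, which coincides with $d_H(v_i,v_j)$ since (as already observed in the paper) geodesics in $G$ have exactly double length in $G_{\oracf}$. For a pair consisting of a middle vertex $v_{i,j}$ and an original vertex $v$, bipartiteness of $G$ (inherited from the partial cube property) forces $d_G(v_i,v)$ and $d_G(v_j,v)$ to differ by exactly $1$, so $d_G(v_i,v) + d_G(v_j,v) = 1 + 2\min(d_G(v_i,v),d_G(v_j,v))$, and the right-hand side is precisely $d_H(v_{i,j},v)$, matching the $Q_{2n}$ distance.

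The main obstacle will be the case of two middle vertices in distinct 4-cycles, corresponding to edges $e = v_iv_j$ and $f = v_kv_l$ of $G$: one must show that the Hamming distance in $Q_{2n}$ between their images matches $d_H$, which in $H$ equals $2 + 2\min_{r\in\{i,j\},\,s\in\{k,l\}} d_G(v_r,v_s)$. This will require a careful global convention for assigning which of the two common neighbours in $Q_{2n}$ plays the role of $v_{i,j}$ versus $v_{j,i}$, ideally driven by the isometric embedding $\phi$ itself (for instance, always associating $v_{i,j}$ with the common neighbour obtained by flipping coordinate $c(e)$ in the first copy, where $c(e)$ is the unique coordinate in which $\phi(v_i)$ and $\phi(v_j)$ differ). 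Closing this case should hinge on a parity argument using the bipartition of $G$ together with the fact that $\phi$ is isometric, guaranteeing that no pair of middle vertices gets placed ``too close'' in $Q_{2n}$. If the plain duplication proves insufficient to avoid shortcuts --- a real risk when two distinct edges of $G$ lie in the same Djokovi\'c--Winkler class, forcing the images of their middle vertices to share coordinates --- the fallback is to enlarge the hypercube by introducing an extra coordinate per edge of $G$, working in $Q_{2n+m}$ with $m = |E(G)|$, and using the new coordinates to separate middle vertices living in distinct 4-cycles.
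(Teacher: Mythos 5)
You leave the decisive case open, and that gap cannot be closed. Your construction is exactly the paper's (duplicate every coordinate and place $v_{i,j},v_{j,i}$ at the two common neighbours of the doubled images of $v_i,v_j$), and your first two distance cases are fine; but the pair of middle vertices coming from two distinct edges of $G$ is not a technicality to be fixed by a ``global convention'' plus parity --- the risk you flag for $\Theta$-equivalent edges is fatal. Take $G=C_4=Q_2$ with $\phi(v_1)=00$, $\phi(v_2)=10$, $\phi(v_3)=11$, $\phi(v_4)=01$ and the parallel edges $v_1v_2$ and $v_3v_4$. The middle vertices of $v_1v_2$ must be placed at $0010$ and $1000$, those of $v_3v_4$ at $1101$ and $0111$; whatever labelling you choose, two of the four cross pairs sit at Hamming distance $2$, whereas in the underlying graph of $G_{\oracf}$ every such pair is at distance $2+2\min d_G=4$. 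The fallback of one extra coordinate per edge does not help: the image of $v_{i,j}$ must be a common neighbour of the images of $v_i$ and $v_j$ (which are at Hamming distance $2$), hence it agrees with both on every new coordinate, so the new coordinates are never used.

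In fact no embedding exists, because the statement itself fails for $G=C_4$: writing $H$ for the underlying graph of $G_{\oracf}$, the half-space $W=\{w\in V(H) : d_H(w,v_3)<d_H(w,v_{3,4})\}$ of the edge $v_3v_{3,4}$ contains $v_{4,3}$ (distances $1<2$) and $v_{1,2}$ (distances $3<4$) but none of the internal vertices $v_4,v_{4,1},v_1$ of the path $v_{4,3},v_4,v_{4,1},v_1,v_{1,2}$, which has length $4=d_H(v_{4,3},v_{1,2})$ and is therefore a geodesic; so $W$ is not convex and, by Djokovi\'c's characterization of partial cubes (equivalently, Winkler's $\Theta$-transitivity criterion), $H$ is not a partial cube. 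The paper's own proof runs the construction you propose and slips exactly at the step you left open: it asserts that changing one entry of $u$ (to get $u_{i,j}$) and one entry of $v$ (to get $u_{r,s}$) always yields $2p+2$ distinct entries, which is false when the two edges are $\Theta$-equivalent and the two flips occur at the same position in the same direction, as in the example above. So your plan, like the paper's argument, does not prove the proposition; the $\NP$-completeness conclusion should instead be drawn for oriented bipartite graphs via Theorem~\ref{thm.npcbip} and the known hardness for bipartite instances.
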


\begin{proof}
    Let \( G \) be a partial cube with \( V(G) = \{ v_1, \ldots ,v_l \} \).
    If \( k \) is the smallest positive integer such that \( G \subseteq Q_k \), each vertex of \( G \) can be considered as an element of \( \{ 0,1 \}^k \).
    Thus let \( H \subseteq Q_{2k} \) be a graph initially with \( V(H) = \{ u_1, \ldots ,u_l \} \subseteq \{ 0,1 \}^{2k} \) and, for each \( u_i = (u_i^1,u_i^2, \ldots ,u_i^{2k}) \) and \( v_i = (v_i^1,v_i^2, \ldots ,v_i^k) \), \( u_i^{2j-1} = u_i^{2j} = v_i^j \) for every \( j \in \{ 1, \ldots ,k \} \).
    Notice that each two vertices \( u_i \) and \( u_j \) differ in a positive even number of entries.

    Take two distinct vertices \( u_i,u_j \in V(H) \) such that there is only one \( m \in \{ 1, \ldots ,k \} \) so that \( u_i^{2m-1} = u_{i}^{2m} \neq u_j^{2m-1} = u_j^{2m} \) (notice that \( v_i,v_j \) are adjacent in \( G \)).
    For each such pair add the vertices \( u_{i,j} = (u_i^1, \ldots ,u_i^{2m-1},u_j^{2m},u_i^{2m+1}, \ldots ,\) \( u_i^{2k}) \) and \( u_{j,i} = (u_j^1, \ldots ,u_j^{2m-1},u_i^{2m},u_j^{2m+1}, \ldots ,u_j^{2k}) \) to \( V(H) \), which differ in the \( 2m-1^{\textrm{th}} \) and \( 2m^{\textrm{th}} \) entries.
    Notice also that both \( u_i \) and \( u_j \) differ in exactly one entry from both \( u_{i,j} \) and \( u_{j,i} \).
    Moreover add the edges \( u_iu_{i,j},u_{i,j}u_j,u_ju_{j,i},\) \(u_{j,i}u_i \) to \( E(H) \), which if oriented as \( (u_i,u_{i,j}),(u_{i,j},u_j),(u_j,u_{j,i}),(u_{j,i},u_i) \) result in \( G_{\oracf} \).

    Next we analyse each kind of pair of vertices of \( H \).
    If our pair is \( u_p,u_q \) we already know that they have at least two distinct entries.
    If it is \( u_p,u_{q,r} \) with \( p \in \{ q,r \} \) we also know that they have all but one entries in common.
    Now take \( p,q,r \in \{ 1, \ldots ,l \} \) such that \( u_{p,q} \in V(H) \) and \( r \notin \{ p,q \} \).
    Define \( m \colon \{ 1, \ldots ,l \}^2 \to \{ 0,1, \ldots ,k \} \) as \( m(i,j)=0 \) if either \( i=j \) or if \( u_i,u_j \) are not adjacent, and as the only \( m \in \{ 1, \ldots ,k \} \) such that \( u_i^{2m} \neq u_j^{2m} \) for every other pair \( (i,j) \).
    Since \( r \notin \{ p,q \} \), there must be \( m \in \{ 1, \ldots ,k \} \setminus \{ m(p,q) \} \) such that \( u_r^{2m-1} = u_r^{2m} \neq u_p^{2m-1} = u_p^{2m} \).
    We know that the \( 2m-1^{\textrm{th}} \) and the \( 2m^{\textrm{th}} \) entries of \( u_p \) and \( u_{p,q} \) are equal, thus \( u_r \) and \( u_{p,q} \) have at least two distinct entries.

    The last case is a pair \( u_{p,q},u_{r,s} \).
    If \( d_G(v_i,v_j) = 1 \) for every \( i \in \{ p,q \} \) and every \( j \in \{ r,s \} \), since \( v_p \) and \( v_q \) are adjacent in \( G \) by the existence of \( u_{p,q} \) we would have triangles in \( G \), which is not possible since it is bipartite.
    Thus there are \( i \in \{ p,q \} \) and \( j \in \{ r,s \} \) such that \( d_G(v_i,v_j) \geq 2 \), which means that we have \( m_1, \ldots ,m_t \in \{ 1, \ldots ,k \} \) such that \( v_i^m \neq v_j^m \) for every \( m \in \{ m_1, \ldots ,m_t \} \) and \( t = d_G(v_i,v_j) \).
    Suppose that there is some \( m \) in the previous set which is not in \( \{ m(p,q),m(r,s) \} \), then \( u_p^{2m-1} = u_p^{2m} = u_q^{2m-1} = u_q^{2m} \neq u_r^{2m-1} = u_r^{2m} = u_s^{2m-1} = u_s^{2m} \).
    Consequently, \( u_{p,q} \) and \( u_{r,s} \) have at least two distinct entries.
    Else we must have \( t=2 \), \( m_1 = m(p,q) \) and \( m_2 = m(r,s) \), without loss of generality.
    Notice that \( u_r^{2m_1-1} = u_r^{2m_1} = u_s^{2m_1-1} = u_s^{2m_1} \) and \( u_p^{2m_1-1} = u_p^{2m_1} \neq u_q^{2m_1-1} = u_q^{2m_1} \), which means that either the \( 2m_1-1^{\textrm{th}} \) or the \( 2m_1^{\textrm{th}} \) positions of \( u_{r,s} \) and \( u_{p,q} \) are different.
    The same can be said about the \( 2m_2-1^{\textrm{th}} \) and \( 2m_2^{\textrm{th}} \) positions, thus \( u_{p,q} \) and \( u_{r,s} \) have at least two distinct entries.
    Therefore, if we combine the results of these last two paragraphs with the construction of \( H \), we conclude that two vertices of \( H \) are adjacent if and only if they have only one distinct entry.

    That being said, all that is left to prove is that \( d_H(u,v) = d_{Q_{2k}}(u,v) \) for every \( u,v \in V(H) \).
    First consider \( u_i \) and \( u_j \).
    If \( v_i \) and \( v_j \) have \( p \) different entries, than their corresponding vertices in \( H \) differ in \( 2p \) entries implying that \( d_{Q_{2k}}(u_i,u_j) = 2p \).
    Since \( G \) is a partial cube we know that \( d_G(v_i,v_j) = d_{Q_k}(v_i,v_j) = p \), thus there is a \( v_iv_j \)-path with length \( p \) in \( G \).
    Due to the construction of \( H \), we also have in it a \( u_iu_j \)-path of length \( 2p \).
    Now suppose that there is a \( u_iu_j \)-path in \( H \) with length \( q < 2p \).
    Seen as the neighbors in \( H \) have exactly one distinct entry, then the endvertices of this path differ in at most \( q \) entries, which is a contradiction.
    Thus \( d_H(u_i,u_j) = 2p \).

    Next we analyze the distance between \( u_{i,j} \) and \( u_{r,s} \).
    Let \( \{ u,u' \} = \{ u_i,u_j \} \) and \( \{ v,v' \} = \{ u_r,u_s \} \) be such that \( u \) and \( v \) are as close as possible.
    It is straightforward that \( d_H(u_{i,j},u_{r,s}) = 2 + d_H(u,v) = 2(p+1) \), where \( 2p \) is the number of distinct entries of \( u \) and \( v \).
    Since we change one entry from \( u \) to \( u_{i,j} \) and one from \( v \) to \( u_{r,s} \) we must have \( 2p+2 \) distinct entries between \( u_{i,j} \) and \( u_{r,s} \), meaning that \( d_{Q_{2k}}(u_{i,j},u_{r,s}) = 2p+2 \).
    This also covers the proof that \( d_H(u_{i,j},u_r) = d_{Q_{2k}}(u_{i,j},u_r) \).
\end{proof}

\begin{cor}
    Given an oriented partial cube \( D \) and a positive integer \( k \), it is \NP-complete to decide whether \( \ohn(D) \leq k \).
\end{cor}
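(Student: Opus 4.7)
The plan is to combine the NP-hardness of computing the hull number on undirected partial cubes~\cite{CIPCTHN} with the two preceding structural results. Given a partial cube $G$ and an integer $k$ as an instance of the undirected hull-number problem, I would construct $G_{\oracf}$ in polynomial time via the directed-$C_4$ replacement defined in Section~\ref{section.bip}, and output the pair $(G_{\oracf}, k)$. By Proposition~\ref{prop.partialcube}, $G_{\oracf}$ is an oriented partial cube, and since partial cubes are bipartite, Theorem~\ref{thm.npcbip} gives $\hn(G) = \ohn(G_{\oracf})$. Hence $\hn(G) \leq k$ if and only if $\ohn(G_{\oracf}) \leq k$, which transports NP-hardness to the oriented setting.

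For NP-membership I would use the obvious certificate: a set $S \subseteq V(D)$ with $|S| \leq k$. The verifier computes $[S]$ by iterating the interval function $I$ starting from $S$ until a fixed point is reached, and accepts if and only if this fixed point equals $V(D)$. One application of $I$ amounts to, for every ordered pair $(u,v)$ in the current set, collecting all vertices on some $(u,v)$-geodesic, which can be done in polynomial time by running BFS from each vertex of $D$. Since $[S]$ is reached after at most $n(D)$ iterations (each iteration either adds a vertex or stabilizes), the total verification cost is polynomial in $n(D)$.

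The main technical obstacles have already been cleared by the preceding machinery: Lemma~\ref{lem.hngleqoracf} supplies the delicate inequality $\hn(G) \leq \ohn(G_{\oracf})$ using the bipartiteness of $G$, and Proposition~\ref{prop.partialcube} guarantees that the reduction stays within the class of partial cubes. What remains for the corollary is essentially bookkeeping: noting that $|V(G_{\oracf})| = |V(G)| + 2|E(G)|$, so the reduction is computable in polynomial time, and that it preserves the parameter $k$ exactly, so a yes-instance maps to a yes-instance and vice versa. Combining these observations with NP-membership finishes the proof.
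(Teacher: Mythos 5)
Your proposal is correct and follows essentially the same route as the paper: it derives hardness by combining the \NP-hardness of the undirected hull number on partial cubes with Theorem~\ref{thm.npcbip} and Proposition~\ref{prop.partialcube}, exactly as the paper does, and your added NP-membership check via iterating the interval function is the standard (and valid) way to complete the \NP-completeness claim.
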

\begin{proof}
    Computing the hull number of undirected partial cubes is an \NP-hard problem~\cite{CIPCTHN}.
Thus, the proof is a direct consequence of Theorem~\ref{thm.npcbip} and Proposition~\ref{prop.partialcube}.
\end{proof}

\section{Reducing Set Cover to Oriented Geodetic Number}
\label{sec.setcover}

\newcommand\myprob[3]{%

\vspace{0.3cm}

\par\noindent #1\\

{\bfseries Input}: #2\\

{\bfseries Question}: #3\par

\vspace{0.3cm}

}

Our goal in this section is to prove that the following problem is \NP-complete, even if restricted to certain classes of oriented graphs:

\myprob{\textsc{Oriented Geodetic Number}}{Oriented graph $D$ and a positive integer $k$.}{Is $\ogn(D)\leq k?$}

First we point out that given a subset of vertices \( S\subseteq V(D) \), one can compute \( (u,v) \)-geodesics, for every \( u,v \in V(D) \), and decide whether \( S \) is a geodetic set in polynomial time similarly to the undirected case~\cite{TGHNIHFCG}.
Consequently, the problem is in \NP.
Thus, we need to prove the \NP-hardness.

The classes we work with are defined by the underlying graph \( G \) of \( D \), when that is either bipartite or split or cobipartite. A graph is \emph{split} (resp. cobipartite) if its vertex set can be partitioned into a stable set and a clique (resp. into two cliques). 
Notice that these graphs have a similar structure: the vertex set \( V(G) \) is partitioned into two smaller sets \( X \) and \( Y \), each one being either a clique or a stable set.
This general structure allows us to use a similar idea in each case to reduce the well-known \textsc{Set Cover}~\cite{Karp1972} problem to \textsc{Oriented Geodetic Number}.

\myprob{\textsc{Set Cover}}{\( U = \{ 1,2, \ldots ,n \} \), \( \mathcal{F} \subseteq \mathcal{P}(U) \) such that \( \bigcup_{F\in\mathcal{F}} F = U \) and $k\in\mathbb{Z}_+^*$.}{Does there exist \( \mathcal{F}' \subseteq \mathcal{F} \) such that \( \bigcup_{F \in \mathcal{F'}} F = U \) and $|\mathcal{F'}|\leq k$?}

The general ideia is to create an oriented graph \( D \) with a vertex set partition \( \{ A,B \} \). To each class, $A$ and $B$ will thus either be a stable set or a clique.
Given \( (U, \mathcal{F}, k) \) an instance to \textsc{Set Cover}, there are subsets \( X \subset A \) and \( Y \subset B \) corresponding to the sets \( \mathcal{F} \) and \( U \), respectively.
The vertices that do not lie in such subsets are auxiliary and they are either extreme or lie in a geodesic between the extreme vertices.

In each case, we shall prove that  \( (U, \mathcal{F}, k) \) is an YES-instance if and only if there is a geodetic set of \( D \) with at most \( k+3 \) vertices in \( X \) and none in \( Y \), and the vertices in \( X \) correspond to a desired family \( \mathcal{F}' \subset \mathcal{F} \) covering \( U \).

Since the reduction to each class has its particularities we make three individual proofs, but let us define a common subgraph to all the constructions. For an instance \(\mathcal{I} =  (U, \mathcal{F}, k) \) of \textsc{Set Cover}, let $D(\mathcal{I})$ be oriented bipartite graph whose vertex set is composed by two subsets of vertices $X$ and $Y$.
In $X$ there is one vertex $f_i$ corresponding to $F_i\in\mathcal{F}$, for every $i\in\{1,\ldots, m\}$.
In $Y$ there is a vertex $u_j$ corresponding to each element $j \in U$. Finally, for every $i\in\{1,\ldots, m\}$ and $j\in\{1,\ldots,n\}$, whenever $j \in F_i$ we have $(f_i,u_j) \in A(D)$.

\begin{thm}\label{thm.npgnbip}
    \textsc{Oriented Geodetic Number} is \NP-hard, even if $D$ has no directed cycle and its underlying graph is bipartite.
\end{thm}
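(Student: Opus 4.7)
The plan is to reduce \textsc{Set Cover} to \textsc{Oriented Geodetic Number} in polynomial time, augmenting the common subgraph $D(\mathcal{I})$ with a handful of auxiliary vertices. Given an instance $\mathcal{I}=(U,\mathcal{F},k)$ with $\mathcal{F}=\{F_1,\ldots,F_m\}$, I would take the bipartition $A=\{f_1,\ldots,f_m,t\}$ and $B=\{u_1,\ldots,u_n,s,t'\}$, keep the arcs $(f_i,u_j)$ of $D(\mathcal{I})$, and add the arcs $(s,f_i)$ and $(f_i,t')$ for each $i\in[m]$, the arcs $(u_j,t)$ for each $j\in[n]$, and the single arc $(s,t)$. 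Every arc runs between $A$ and $B$, so the underlying graph is bipartite, and the topological order $s,f_1,\ldots,f_m,u_1,\ldots,u_n,t,t'$ witnesses that $D$ is a DAG. Checking the definition of extreme vertex, the only extreme vertices of $D$ are the source $s$ and the sinks $t$ and $t'$: each $f_i$ and each $u_j$ has positive in-degree and out-degree, and the transitivity condition fails in both cases because every required shortcut arc would connect two vertices of the same color class.

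Next I would analyse the geodesic structure. The arc $(s,t)$ yields $d(s,t)=1$, so the $(s,t)$-geodesic has empty interior, but $d(s,t')=2$ and the $(s,t')$-geodesics are exactly $\{s\to f_i\to t':i\in[m]\}$, so their interiors together cover every $f_i$. Similarly $d(f_i,t)=2$ and the $(f_i,t)$-geodesics are precisely $\{f_i\to u_j\to t:j\in F_i\}$, so adding $f_i$ to the geodetic set covers each $u_j$ with $j\in F_i$. Combining these facts, $\{s,t,t'\}\cup\{f_i:i\in I\}$ is a geodetic set whenever $\{F_i:i\in I\}$ is a cover of $U$, which gives the upper bound $\ogn(D)\leq 3+\tau(U,\mathcal{F})$, where $\tau$ denotes the minimum set cover size.

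For the matching lower bound I would fix an arbitrary geodetic set $S$ and show $|S|\geq 3+\tau(U,\mathcal{F})$. The three extreme vertices are mandatory, so it suffices to control $I_f:=S\cap\{f_1,\ldots,f_m\}$ and $I_u:=S\cap\{u_1,\ldots,u_n\}$. Enumerating the geodesics between the finitely many pairs of vertices of $S$, one verifies that a $u_j\notin S$ can be interior to such a geodesic only when the endpoints are some $f_i\in S$ and $t$ with $j\in F_i$, since every other geodesic either has $u_j$ as an endpoint or does not contain it. Hence the family $\{F_i:f_i\in I_f\}\cup\{\{j\}:u_j\in I_u\}$ covers $U$, and a standard exchange argument (replace each singleton $\{j\}$ by any $F\in\mathcal{F}$ containing $j$) shows that allowing singletons does not decrease the optimum of \textsc{Set Cover}. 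Therefore $|I_f|+|I_u|\geq\tau(U,\mathcal{F})$, giving $\ogn(D)=3+\tau(U,\mathcal{F})$, so $(U,\mathcal{F},k)$ is a YES-instance of \textsc{Set Cover} iff $\ogn(D)\leq k+3$.

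The main obstacle I expect is the case analysis in the lower bound: one has to enumerate the possible geodesics between each pair of vertices of $S\subseteq\{s,t,t',f_1,\ldots,f_m,u_1,\ldots,u_n\}$ and confirm that no exotic combination of selected $u_j$'s, $f_i$'s, or extreme vertices secretly covers a $u_j$ outside the $(f_i,t)$-geodesic pattern. Once this routine but careful bookkeeping is in place, the polynomial-time reduction above transfers the \NP-hardness of \textsc{Set Cover} to \textsc{Oriented Geodetic Number} restricted to bipartite DAGs.
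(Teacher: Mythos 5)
Your construction is the paper's construction with the vertices renamed ($s,t,t'$ playing the roles of the paper's $u,v,w$), and both directions of the equivalence $\ogn(D)\le k+3 \Leftrightarrow$ YES-instance are argued with the same key observations (mandatory extreme vertices, the $(s,f_i,t')$ and $(f_i,u_j,t)$ geodesics, and the exchange of a selected $u_j$ for a set $F_i\ni j$). So the proposal is correct and takes essentially the same route as the paper.
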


\begin{proof}
    As we mentioned, we reduce the \textsc{Set Cover} problem to \textsc{Oriented Geodetic Number}. 
    Let $\mathcal{I}=(U= \{ 1,2, \ldots ,n \}, \mathcal{F} = \{F_1,\ldots, F_m\},k)$ be an input to \textsc{Set Cover}.
    We shall construct an oriented graph $D$ such that $(U, \mathcal{F},k)$ is an YES-instance if and only if $\ogn(D)\leq k+3$.

    We build $D$ from $D(\mathcal{I})$ by adding three vertices $u$, $v$ and $w$.
    Besides the arcs in $D(\mathcal{I})$, we add to $D$: $(u,f_i),(f_i,w)$ for every $i \in \{1,\ldots, m\}$, $(u_j,v)$ for every $j \in \{1,\ldots,n\}$, and finally $(u,v)$.

    By construction, $D$ is clearly a DAG whose underlying graph is bipartite (with partition \( V(D) = (X \cup \{ v \} ) \cup (Y \cup \{ u,w \} ) \)).
    Figure \ref{fig.scdag} exemplifies $D$ when $U = \{ 1,2,3,4,5 \}$ and $\mathcal{F} = \{ F_1 = \{ 1,2,3,4 \},F_2 = \{ 1,4 \},F_3 = \{ 2,3,5 \} \}$.
    Notice that $u$ is a source and that $v$ and $w$ are sinks, thus they belong to any geodetic set.
    Besides, \( (u,w) \notin A(D) \) and for every \( f_i \in X \) we have the geodesic \( (u,f_i,w) \).
    Since \( (u,v) \in A(D) \) we have \( I( \{ u,v,w \} ) = X \cup \{ u,v,w \} \).
    
    \begin{figure}[!htb]
        \begin{center}
            \begin{tikzpicture}
                \node[black] at (1.5,2.2) {\( u_1 \)};
                \node[black] at (1.5,1.3) {\( u_2 \)};
                \node[black] at (1.5,0.4) {\( u_3 \)};
                \node[black] at (1.5,-0.5) {\( u_4 \)};
                \node[black] at (1.5,-1.4) {\( u_5 \)};
                \node[black] at (1.5,-2.5) {\( Y \)};
                \node[black] at (-1.5,1.75) {\( f_1 \)};
                \node[black] at (-1.5,0.4) {\( f_2 \)};
                \node[black] at (-1.5,-0.95) {\( f_3 \)};
                \node[black] at (-1.5,-2.5) {\( X \)};
                \node[black] at (-3.9,0) {\( u \)};
                \node[black] at (3.9,0) {\( v \)};
                \node[black] at (0.4,-2.5) {\( w \)};
                \draw[black, thick, dashed] (1.5,0.15) ellipse (0.7cm and 2.3cm);
                \draw[black, thick, dashed] (-1.5,0.15) ellipse (0.7cm and 2.3cm);
                \node[circle,fill=black,inner sep=0pt,minimum size=0.3cm, name=1] at (1.5,1.8) {};
                \node[circle,fill=black,inner sep=0pt,minimum size=0.3cm, name=2] at (1.5,0.9) {};
                \node[circle,fill=black,inner sep=0pt,minimum size=0.3cm, name=3] at (1.5,0) {};
                \node[circle,fill=black,inner sep=0pt,minimum size=0.3cm, name=4] at (1.5,-0.9) {};
                \node[circle,fill=black,inner sep=0pt,minimum size=0.3cm, name=5] at (1.5,-1.8) {};
                \node[circle,fill=black,inner sep=0pt,minimum size=0.3cm, name=f1] at (-1.5,1.35) {};
                \node[circle,fill=black,inner sep=0pt,minimum size=0.3cm, name=f2] at (-1.5,0) {};
                \node[circle,fill=black,inner sep=0pt,minimum size=0.3cm, name=f3] at (-1.5,-1.35) {};
                \node[circle,fill=black,inner sep=0pt,minimum size=0.3cm, name=u] at (-3.5,0) {};
                \node[circle,fill=black,inner sep=0pt,minimum size=0.3cm, name=v] at (3.5,0) {};
                \node[circle,fill=black,inner sep=0pt,minimum size=0.3cm, name=w] at (0,-2.5) {};
                \draw[black, thick, ->, >=stealth] (-3.5,0) arc (180:3:3.5);
                \draw[black, thick, ->, >=stealth] (f1) -- (1);
                \draw[black, thick, ->, >=stealth] (f1) -- (2);
                \draw[black, thick, ->, >=stealth] (f1) -- (3);
                \draw[black, thick, ->, >=stealth] (f1) -- (4);
                \draw[black, thick, ->, >=stealth] (f2) -- (1);
                \draw[black, thick, ->, >=stealth] (f2) -- (4);
                \draw[black, thick, ->, >=stealth] (f3) -- (2);
                \draw[black, thick, ->, >=stealth] (f3) -- (3);
                \draw[black, thick, ->, >=stealth] (f3) -- (5);
                \draw[black, thick, ->, >=stealth] (u) -- (f1);
                \draw[black, thick, ->, >=stealth] (u) -- (f2);
                \draw[black, thick, ->, >=stealth] (u) -- (f3);
                \draw[black, thick, ->, >=stealth] (f1) -- (w);
                \draw[black, thick, ->, >=stealth] (f2) -- (w);
                \draw[black, thick, ->, >=stealth] (f3) -- (w);
                \draw[black, thick, ->, >=stealth] (1) -- (v);
                \draw[black, thick, ->, >=stealth] (2) -- (v);
                \draw[black, thick, ->, >=stealth] (3) -- (v);
                \draw[black, thick, ->, >=stealth] (4) -- (v);
                \draw[black, thick, ->, >=stealth] (5) -- (v);
            \end{tikzpicture}
            \caption{Oriented graph constructed in the proof of Theorem \ref{thm.npgnbip}, assuming \( U = \{ 1,2,3,4,5 \} \) and \( \mathcal{F} = \{ F_1 = \{ 1,2,3,4 \},F_2 = \{ 1,4 \},F_3 = \{ 2,3,5 \} \} \).}
            \label{fig.scdag}
        \end{center}
    \end{figure}
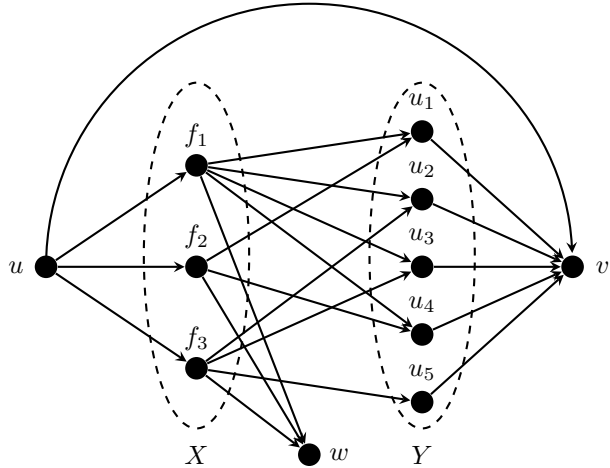

    Let $\mathcal{F}' = \{ F_i \mid i \in I\} \subseteq \mathcal{F}$, for some $I\subseteq \{1,\ldots, m\}$ such that $\bigcup_{i\in I} F_{i} = U$ and $|I| \leq k$.
    We then take $ \mathcal{X} = \{ f_i \mid i\in I \} \cup \{ u,v,w \} $, which has cardinality at most \( k+3 \).
    Thus, for every \( u_j \in Y \) there is \( f_i \in \mathcal{X} \) such that \( (f_i,u_j) \in A(D) \), from where we have the geodesic \( (f_i,u_j,v) \).
    Therefore, one can observe that $\mathcal{X}$ is a geodetic set of $D$.

    On the other hand, let $S$ be a geodetic set of $D$ with at most $k+3$ vertices.
    Thus, $\{u,v,w\} \subseteq S$ and at most $k$ vertices of $S$ belong to $X\cup Y$.
    If some $u_j \in S$, one can observe that by replacing $u_j$ with a vertex $f_i$ such that $(f_i,u_j)\in A(D)$, we obtain another geodetic set $S'$ such that $|S'|\leq k+3$.
    Thus, without loss of generality, we assume that $S \setminus\{u,v,w\}\subseteq X$.
    Let $I = \{ i \in \{1,\ldots,m\} \mid f_i \in S \}$.
    One can also observe that the family $ \mathcal{F'} = \{ F_i \in \mathcal{F} \mid i \in I \}$ satisfies $\bigcup_{F_i \in \mathcal{F'}} F_i = U$ and $|\mathcal{F'}| \leq k$.
\end{proof}

The particularities we mentioned before are related to the basic difference of each class: the vertex partition \( \{ A,B \} \).
In Theorem~\ref{thm.npgnbip} we assumed that both \( A \) and \( B \) were stable.
In Theorem~\ref{thm.npgnsplit}, we address to the case in which the underlying graph is split and then we consider \( A \) as a clique, while \(B\) is still a stable set.

\begin{thm}\label{thm.npgnsplit}
    \textsc{Oriented Geodetic Number} is \NP-hard, even if $D$ has an underlying split graph.
\end{thm}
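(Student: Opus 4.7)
The plan is to mimic the proof of Theorem~\ref{thm.npgnbip}, modifying the gadget so that its underlying graph becomes split. Given an instance $\mathcal{I}=(U,\mathcal{F},k)$ of \textsc{Set Cover}, I would start from $D(\mathcal{I})$ and add a transitive tournament orientation on $X$ (the arcs $(f_i,f_j)$ for $i<j$) together with four auxiliary vertices $u,v,w,z$, placing $v$ in the clique side and $u,w,z$ in the stable side. The additional arcs are $(u,f_i),(f_i,w)$ and $(v,f_i)$ for every $i$, together with $(u,v),(v,z)$ and $(u_j,v)$ for every $j$. The underlying graph is then split with clique $K=X\cup\{v\}$ and stable set $I=Y\cup\{u,w,z\}$; the digraph clearly contains directed cycles such as $v\to f_i\to u_j\to v$ whenever $j\in F_i$, as the paper's remark foretells.

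The relevant distances are $d(u,v)=1$, $d(u,w)=d(u,z)=2$, $d(f_i,v)=2$ (achieved by $f_i\to u_j\to v$ for any $j\in F_i$), and $d(f_i,z)=3$ (via $f_i\to u_j\to v\to z$). The extreme vertices are the source $u$ and the sinks $w$ and $z$, so they lie in every geodetic set of $D$. Although $v$ is not extreme, once $\{u,z\}\subseteq S$ the geodesic $u\to v\to z$ places $v$ in $I(S)$ automatically. I claim that $\ogn(D)\leq k+3$ if and only if $\mathcal{I}$ is a \textsc{yes}-instance. For the forward direction, given a cover $\mathcal{F}'=\{F_i : i\in I'\}$ of size $k$, the set $S=\{u,w,z\}\cup\{f_i : i\in I'\}$ has size $k+3$ and is geodetic: each $f_j\notin S$ lies on the geodesic $u\to f_j\to w$, $v$ lies on $u\to v\to z$, and for each $j\in U$ one picks $i\in I'$ with $j\in F_i$ and uses $f_i\to u_j\to v\to z$ to place $u_j$ in $I(S)$.

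For the converse, let $S$ be geodetic with $|S|\leq k+3$, so $\{u,w,z\}\subseteq S$ and $|S\cap(X\cup Y\cup\{v\})|\leq k$. The main obstacle is to pin down which geodesics can place a vertex $u_j$ in $I(S)$. Since $u_j$'s unique out-neighbour is $v$ and its in-neighbours lie in $\{f_i : j\in F_i\}$, and since $d(u,v)=1$ rules out extensions of the form $u\to f_i\to u_j\to\cdots$, the only geodesics through $u_j$ are $f_i\to u_j\to v$, $f_i\to u_j\to v\to z$, and $f_i\to u_j\to v\to f_\ell$ with $\ell<i$. In each of these cases some $f_i\in S\cap X$ with $j\in F_i$ is forced. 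Hence $\mathcal{F}^{*}=\{F_i : f_i\in S\cap X\}\cup\{F_{\phi(j)} : u_j\in S\cap Y\}$, where $\phi(j)$ is any index with $j\in F_{\phi(j)}$, is a set cover of $U$ of cardinality at most $|S\cap X|+|S\cap Y|\leq k$.
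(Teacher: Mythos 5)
Your gadget is, up to renaming ($v$ and $z$ here playing exactly the roles of the paper's $x$ and $y$), the same construction used in the paper's proof, and your forward and backward arguments follow the same lines, so the proposal is correct and essentially identical in approach. (Two harmless nits that do not affect the argument: your list of geodesics through $u_j$ can in degenerate instances be extended by one further arc to some $u_{j'}$, and the bound $d(f_{i'},v)=2$ used to rule out predecessors of $f_i$ tacitly assumes every $F_{i'}\neq\emptyset$ --- the usual preprocessing that discards empty sets; in all cases the geodesic still starts at some $f_i\in S$ with $j\in F_i$, which is all your converse needs.)
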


\begin{proof}
    We reduce the \textsc{Set Cover} problem to \textsc{Oriented Geodetic Number}.
    Let $\mathcal{I}=(U= \{ 1,2, \ldots ,n \}, \mathcal{F} = \{F_1,\ldots, F_m\},k)$ be an input to \textsc{Set Cover}.
    We shall construct an oriented graph $D$ such that $(U, \mathcal{F},k)$ is a YES-instance if and only if $\ogn(D) \leq k+3$.

    We build \( D\) from \(D(\mathcal{I})\) by adding four vertices \( u \), \( w \), \( x \) and \( y \). Besides the arcs in \(D(\mathcal{I})\) our oriented split graph \( D \) has the following arcs: $(f_i,f_j)$ whenever $1\leq i<j\leq m$; \( (u,f_i),(x,f_i),(f_i,w) \) for every \( i \in \{ 1, \ldots ,m \} \); \( (u_j,x) \) for every \( j \in \{ 1, \ldots ,n \} \); and finally \( (u,x) \) and \( (x,y) \).

    Notice that by construction, \( C = \{ u,x \} \cup X \) is a clique and \( S = \{ w,y \} \cup Y \) is a stable set, therefore the underlying graph of \( D \) is split.
    Moreover, \( u \) is a source and \( w,y \) are sinks, thus they belong to any geodetic set.
    If we analyse the \( (u,w) \)-geodesics and the \( (u,y) \)-geodesics we find that they are respectively \( (u,f_i,w) \), for every \( i \in \{ 1, \ldots ,m \} \), and \( (u,x,y) \).
    We then have \( I( \{ u,w,y \} ) = C \cup \{ w,y \} \).

    Let \( \mathcal{F}' = \{ F_i \mid i \in I\} \subseteq \mathcal{F} \), for some $I \subseteq \{1, \ldots, m\}$ such that \( \bigcup_{i\in I} F_{i} = U \) and \( |I| \leq k \).
    We then take $ \mathcal{X} = \{ f_i \mid i\in I \} \cup \{ u,w,y \} $, which has cardinality at most \( k+3 \).
    Thus, for every \( u_j \in Y \) there is \( f_i \in \mathcal{X} \) such that \( (f_i,u_j) \in A(D) \).
    Since \( (x,f_i) \in A(D) \) for every \( i \in \{ 1, \ldots ,m \} \), \( (f_i,u_j,x,y) \) is a geodesic for every \( i \in I \) and every \( j \in \{ 1, \ldots ,n \} \).
    Therefore, one can observe that $\mathcal{X}$ is a geodetic set of $D$.

    On the other hand, let $\mathcal{X}$ be a geodetic set of $D$ with at most $k+3$ vertices.
    Thus, $\{ u,w,y \} \subseteq \mathcal{X}$ and at most $k$ vertices of $\mathcal{X}$ belong to $X \cup Y \cup \{ x \}$.
    If some $ u_j \in \mathcal{X}$, one can observe that by replacing $u_j$ with a vertex $f_i$ such that $(f_i,u_j) \in A(D)$, we obtain another geodetic set $\mathcal{X}'$ such that $|\mathcal{X}'| \leq k+3$.
    Moreover, taking an arbitrary vertex \( v \in V(D) \), any \( (v,x) \)-geodesic can be extended to a \( (v,y) \)-geodesic, since \( N^+(x) = \{ y \} \).
    Thus, without loss of generality, we assume that $ \mathcal{X} \setminus \{ u,w,y \} \subseteq X$.
    Let $I = \{i \in \{ 1, \ldots ,m \} \mid u_i \in S\}$.
    One can also observe that the family $\mathcal{F'} = \{ F_i \in \mathcal{F} \mid i \in I \}$ satisfies $ \bigcup_{F_i \in \mathcal{F'}} F_i = U$ and $|\mathcal{F'}| \leq k$.
\end{proof}

In Theorem~\ref{thm.npgncobip} we assume that both \( A \) and \( B \) are cliques.
The oriented graph \( D \) that we shall build is very similar to the one used in the proof of Theorem~\ref{thm.npgnbip}, we only add more arcs because we need $A$ and $B$ to be cliques instead of stable sets.

\begin{thm}\label{thm.npgncobip}
    \textsc{Oriented Geodetic Number} is \NP-hard, even if $D$ has no directed cycle and its underlying is cobipartite.
\end{thm}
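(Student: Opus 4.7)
The plan is to reduce \textsc{Set Cover} to \textsc{Oriented Geodetic Number} exactly as in Theorem~\ref{thm.npgnbip}, using the same underlying structure: start from $D(\mathcal{I})$, add the three auxiliary vertices $u, v, w$ together with the cross-arcs $(u, f_i)$, $(f_i, w)$, $(u_j, v)$, $(u, v)$ and $(f_i, u_j)$ for $j \in F_i$, and then complete each of the two partite classes of the resulting bipartition into a clique by adding transitive-tournament arcs. The only delicate choice is how to distribute $u, v, w$ across the two cliques: since the reduction relies on the two length-$2$ geodesics $(u, f_i, w)$ and $(f_i, u_j, v)$, neither of the pairs $\{u, w\}$ nor $\{f_i, v\}$ may lie inside a single clique, for otherwise the corresponding arc would become direct and the geodesic would collapse. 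The choice $A = X \cup \{u\}$ and $B = Y \cup \{v, w\}$, with the transitive orderings $u < f_1 < \cdots < f_m$ on $A$ and $u_1 < \cdots < u_n < v < w$ on $B$, preserves both geodesics; and one checks that $D$ is acyclic, its underlying graph is cobipartite, $u$ and $w$ are the only extreme vertices (source and sink, respectively), and $v$ is captured by the $(u, w)$-geodesic $(u, v, w)$.

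The claim is that $(U, \mathcal{F}, k)$ is a YES-instance iff $\ogn(D) \leq k + 3$. For the forward direction, if $I$ is a set cover of size $k$, the set $\mathcal{X} = \{u, v, w\} \cup \{f_i : i \in I\}$ is geodetic by the same argument as in Theorem~\ref{thm.npgnbip}: the $(u, w)$-geodesics cover every $f_l$ and $v$, while the $(f_i, v)$-geodesics $(f_i, u_j, v)$ cover every $u_j$ with $j \in F_i$. For the converse, a swap argument in the spirit of those in Theorems~\ref{thm.npgnbip} and~\ref{thm.npgnsplit} produces, from any geodetic set of size $\leq k + 3$, another one of the canonical form $\{u, v, w\} \cup \{f_i : i \in I\}$ with the same size, from which a set cover of size $\leq k$ is read off by taking $\mathcal{F}' = \{F_i : i \in I\}$.

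The main obstacle will be the swap argument itself, which is subtler than in the bipartite and split cases because the clique structure spawns alternative covering geodesics of length $2$ — notably $(u, f_i, u_j)$ for $j \in F_i$ and $(f_l, f_i, u_j)$ for $j \in F_i \setminus F_l$ — which in principle allow one to replace $v$ or some $f_i$ by one or several $u_j$'s and still cover everything. To rule these out, I will preprocess the \textsc{Set Cover} input by duplicating each element of $U$ polynomially many times (as a function of $k$), which leaves the set-cover optimum unchanged but makes any strategy that places $u_j$'s in $S$ strictly more expensive than the canonical one; consequently the minimum geodetic set has the desired form $\{u, v, w\} \cup \{f_i : i \in I^{\ast}\}$ for an optimal set cover $I^{\ast}$, and the equivalence $\ogn(D) \leq k + 3 \Leftrightarrow (U, \mathcal{F}, k) \text{ is YES}$ follows.
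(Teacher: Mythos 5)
Your construction deviates from the paper's in a way that matters, and the step you defer --- the converse direction --- is precisely where the argument breaks. First, with your split $A=X\cup\{u\}$, $B=Y\cup\{v,w\}$ (orderings $u<f_1<\cdots<f_m$ and $u_1<\cdots<u_n<v<w$), the vertex $v$ has the out-neighbour $w$, so $v$ is not extreme and is in fact always covered by the geodesic $(u,v,w)$; hence only $u,w$ are forced into a geodetic set, and the ``$k+3$'' accounting is already loose. (The paper instead takes the cliques $X\cup\{w\}$ and $Y\cup\{u,v\}$, which keeps $u,v,w$ extreme.) Second, and decisively, your proposed duplication preprocessing does not do what you claim, because the cheap alternative coverings are not confined to strategies that buy many $u_j$'s. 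If $f_l$ and $u_{j''}$ both lie in $S$ and $d(f_l,u_{j''})=3$ (which happens exactly when $j''\notin F_l$, no element of $F_l$ precedes $j''$, and no $F_i$ with $i>l$ contains $j''$), then \emph{every} path $(f_l,f_i,u_{j'},u_{j''})$ with $i>l$, $j'\in F_i$ and $j'$ preceding $j''$ is a geodesic, so this single pair covers all such $u_{j'}$ even though neither endpoint corresponds to a set containing $j'$. Duplicating elements leaves these geodesics intact (all copies of the relevant elements are swallowed by the same pair), so the claim that placing $u_j$'s in $S$ becomes ``strictly more expensive'' is false, and no canonical-form conclusion follows.

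Concretely, take $U=\{1,\ldots,5\}$ with the singletons $F_1=\{4\}$, $F_2=\{5\}$, $F_3=\{1\}$, $F_4=\{2\}$, $F_5=\{3\}$, whose optimal cover has size $5$. In your oriented graph the set $\{u,v,w,f_2,u_4\}$ is geodetic: each $f_i$ lies on $(u,f_i,w)$, $u_5$ lies on $(f_2,u_5,v)$, and since $d(f_2,u_4)=3$ the geodesics $(f_2,f_3,u_1,u_4)$, $(f_2,f_4,u_2,u_4)$, $(f_2,f_5,u_3,u_4)$ cover $u_1,u_2,u_3$. So $\ogn(D)\leq 5=k+3$ with $k=2$, yet there is no cover of size $2$; the same phenomenon survives duplication (with $t$ copies per element, $\{u,v,w,f_1,f_2,u_{4,t}\}$ is geodetic while the cover optimum stays $5$). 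So the equivalence you need fails for your construction, and the fix must change the graph so that such long $(f_l,u_{j''})$-geodesics cannot arise (or cannot be exploited), not merely inflate the ground set. This is a genuine gap: the forward direction and the general plan mirror the paper, but the converse --- the only nontrivial part --- is left to an unproved preprocessing claim that is in fact incorrect.
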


\begin{proof}
    We reduce the \textsc{Set Cover} problem to \textsc{Oriented Geodetic Number}. 
    Let $\mathcal{I} = (U= \{ 1,2, \ldots ,n \}, \mathcal{F} = \{F_1,\ldots, F_m\},k)$ be an input to \textsc{Set Cover}.
    We shall construct an oriented graph $D$ such that $(U, \mathcal{F},k)$ is a YES-instance if and only if $\ogn(D)\leq k+3$.

    We build $D$ from \(D(\mathcal{I})\) by adding three vertices $u$, $v$ and $w$. Besides the arcs in \(D(\mathcal{I})\), \(D\) has the following arcs: $(f_i,f_j)$ whenever $1\leq i<j\leq m$; $(u_i,u_j)$ whenever $1\leq i<j\leq n$; $(u,f_i),(f_i,w)$ for every $i \in \{1,\ldots, m\}$; $(u,u_j),(u_j,v)$ for every $j \in \{1,\ldots,n\}$; and finally $(u,v)$.

    By construction, both \( C_1 = X \cup \{ w \} \) and \( C_2 = Y \cup \{ u,v \} \) are cliques, thus \( D \) is an oriented cobipartite graph. Moreover, \((u,f_1,\ldots, f_m, u_1,\ldots,u_n,w,v)\) is a total ordering of \(V(D)\) so that no arc is oriented backwards and thus \(D\) is an acyclic orientation.
    Notice that $u$ is a source and that $v$ and $w$ are sinks, thus they belong to any geodetic set.
    Besides, \( (u,w) \notin A(D) \) and for every \( f_i \in X \) we have the geodesic \( (u,f_i,w) \).
    Since \( (u,v) \in A(D) \) we have \( I( \{ u,v,w \} ) = X \cup \{ u,v,w \} \).

    Let $\mathcal{F}' = \{ F_i \mid i \in I\} \subseteq \mathcal{F}$, for some $I \subseteq \{1,\ldots, m\}$ such that $\bigcup_{i\in I} F_{i} = U$ and $|I| \leq k$.
    We then take $ \mathcal{X} = \{ f_i \mid i \in I \} \cup \{ u,v,w \} $, which has cardinality at most \( k+3 \).
    Thus, for every \( u_j \in Y \) there is \( f_i \in \mathcal{X} \) such that \( (f_i,u_j) \in A(D) \), from where we have the geodesic \( (f_i,u_j,v) \).
    Therefore, one can observe that $\mathcal{X}$ is a geodetic set of $D$.

    On the other hand, let $\mathcal{X}$ be a geodetic set of $D$ with at most $k+3$ vertices.
    Thus, $\{u,v,w\} \subseteq \mathcal{X}$ and at most $k$ vertices of $\mathcal{X}$ belong to $X\cup Y$.
    If some $u_j \in S$, one can observe that by replacing $u_j$ with a vertex $f_i$ such that $(f_i,u_j) \in A(D)$, we obtain another geodetic set $\mathcal{X}'$ such that $|\mathcal{X}'| \leq k+3$.
    Thus, without loss of generality, we assume that $\mathcal{X} \setminus \{ u,v,w \} \subseteq X$.
    Let $I = \{ i \in \{ 1, \ldots ,m \} \mid f_i \in \mathcal{X} \}$.
    One can also observe that the family $ \mathcal{F'} = \{ F_i \in \mathcal{F} \mid i \in I \}$ satisfies $\bigcup_{F_i \in \mathcal{F'}} F_i = U$ and $|\mathcal{F'}| \leq k$.
\end{proof}

\begin{cor}
    \textsc{Oriented Geodetic Number} is \W[2]-hard parameterized by $k$ and does not admit any polynomial-time approximation algorithm with ratio $c \cdot \ln n$, unless $\Poly = \NP$, whenever $G$ is split, bipartite or cobipartite.
\end{cor}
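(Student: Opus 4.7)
The plan is to observe that each of the three reductions established in Theorems~\ref{thm.npgnbip}, \ref{thm.npgnsplit}, and~\ref{thm.npgncobip} is in fact an FPT reduction from \textsc{Set Cover} that is also approximation-preserving up to constants, and then invoke the known hardness results for \textsc{Set Cover}: W[2]-hardness parameterized by solution size~\cite{DF2012} and the lack of a $(c\cdot\ln n)$-approximation unless $\Poly=\NP$~\cite{RS1997}.

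\textbf{Step 1: parameter-preserving reduction.} In each of the three constructions, an instance $(U,\mathcal{F},k)$ of \textsc{Set Cover} is mapped in polynomial time to an oriented graph $D$ (with the desired structural restriction) together with the parameter $k+3$, and the equivalence $\mathrm{opt}_{SC}(U,\mathcal{F})\leq k \Leftrightarrow \ogn(D)\leq k+3$ is established inside the proof. Hence the map $k\mapsto k+3$ is an FPT reduction, so the W[2]-hardness of \textsc{Set Cover} parameterized by $k$ transfers directly to \textsc{Oriented Geodetic Number} in each of the three graph classes.

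\textbf{Step 2: approximation preservation.} For each reduction one checks that $|V(D)|=n+m+O(1)$, which is polynomial in the size of the \textsc{Set Cover} instance, and that $\ogn(D)=\mathrm{opt}_{SC}(U,\mathcal{F})+3$. Thus given a hypothetical polynomial-time $(c\cdot \ln |V(D)|)$-approximation $\mathcal{X}$ for \textsc{Oriented Geodetic Number}, the set $\{F_i:f_i\in \mathcal{X}\cap X\}$ (after the standard replacement of any $u_j\in Y\cap \mathcal{X}$ by some $f_i$ with $(f_i,u_j)\in A(D)$, exactly as done in the three theorems) is a feasible cover of $U$ of size at most $c\cdot \ln |V(D)|\cdot(\mathrm{opt}_{SC}+3)-3$. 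Since the classical inapproximability of \textsc{Set Cover} holds already when $m=\mathrm{poly}(n)$ and when $\mathrm{opt}_{SC}$ grows with $n$ (one may always pad), we have $\ln |V(D)|=\Theta(\ln n)$ and the additive $+3$ is absorbed; hence a $(c'\cdot\ln n)$-approximation for \textsc{Oriented Geodetic Number}, for a sufficiently small constant $c'>0$, would yield a $(c\cdot\ln n)$-approximation for \textsc{Set Cover}, contradicting $\Poly\neq\NP$.

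\textbf{Main obstacle.} The only non-trivial point is the bookkeeping in Step~2: one must check that the additive $+3$ and the slight discrepancy between $\ln |V(D)|$ and $\ln n$ do not wipe out the inapproximability factor. This is handled by the standard observation that it suffices to consider \textsc{Set Cover} instances where $m=\mathrm{poly}(n)$ and $\mathrm{opt}_{SC}=\omega(1)$, a regime in which the classical lower bound~\cite{RS1997} still applies; conventional padding (e.g., taking disjoint copies of the instance) makes this precise. With that in hand the corollary follows at once for all three graph classes.
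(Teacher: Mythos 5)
Your proposal is correct and follows essentially the same route as the paper: observe that each of the three constructions is a polynomial, parameter-preserving reduction from \textsc{Set Cover} (parameter $k\mapsto k+3$, instance size linear in $n+m$), and transfer the known \W[2]-hardness and $(c\cdot\ln n)$-inapproximability. The only difference is that you spell out the approximation-preservation bookkeeping (the additive $+3$ and the $\ln|V(D)|$ versus $\ln n$ discrepancy), which the paper leaves implicit; this is a harmless and indeed welcome elaboration, not a divergence in approach.
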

\begin{proof}
    One should notice that in the proofs of Theorems~\ref{thm.npgnbip},~\ref{thm.npgnsplit} and~\ref{thm.npgncobip} the constructed instances have linear size in $n+m$ (recall that $|U|=n$ and $|\mathcal{F}|=m$) and the parameter in the constructed instance of \textsc{Oriented Geodetic Number} is $k+3$, where $k$ is the one provided by the the input instance of \textsc{Set Cover}. Thus, as these hardness results hold for the Set Cover problem~\cite{RS1997,DF2012}, they also do for \textsc{Oriented Geodetic Number}.
\end{proof}

\section{Polynomial-time algorithm for cacti}
\label{section.tc}

The hull and geodetic numbers of an undirected tree \( T \) are both equal to the number of leaves of \( T \).
Notice that the leaves of a tree are its simplicial vertices.
Moreover, any node belongs to a \( uv \)-path for some distinct leaves $u,v \in V(T)$, and that path is a geodesic because it is unique.
That means that the set of simplicial vertices of a tree is a minimum hull and geodetic set.
A similar statement is true for the oriented case.

\begin{prop}
    Let \( D \) be an oriented tree.
    Then, $\ext(D)$ is both a minimum hull set and a minimum geodetic set of $D$.
    Consequently, they are unique and \( \ohn (D) = \ogn (D) = |\ext(D)| \).
\end{prop}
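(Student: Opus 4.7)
The plan is to first identify $\ext(D)$ explicitly for an oriented tree. Since the underlying tree has no triangles, no vertex can be transitive: if $v$ had both an in-neighbor $u$ and an out-neighbor $w$, then $(u,w) \in A(D)$ would force $u,v,w$ to span a triangle in the underlying graph, which is impossible. Hence $\ext(D)$ is exactly the set of sources and sinks of $D$.

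The heart of the argument is to show that $\ext(D)$ is a geodetic set. Take any $v \in V(D)$. If $v \in \ext(D)$, we are done. Otherwise $v$ has at least one in-neighbor and at least one out-neighbor. Starting from $v$ and repeatedly moving to an in-neighbor, one obtains a directed path in $D$; since orientations of trees contain no directed cycle (a directed cycle would project to a cycle in the underlying tree), this process cannot revisit a vertex and must terminate at some source $s$. Analogously, moving forward from $v$ yields a directed path terminating at some sink $t$. Concatenating these gives a directed walk from $s$ to $t$ through $v$; this walk is in fact a simple path, because any repeated vertex would create a closed directed sub-walk and hence a directed cycle, contradicting acyclicity.

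Because the underlying graph is a tree, the simple $(s,t)$-path we obtained is the only directed $(s,t)$-path in $D$ (any other directed $(s,t)$-path would give a second simple path between $s$ and $t$ in the underlying tree). Being the unique such path, it is trivially an $(s,t)$-geodesic, and therefore $v \in I(\{s,t\}) \subseteq I(\ext(D))$. This shows $I(\ext(D)) = V(D)$, so $\ext(D)$ is a geodetic set, hence also a hull set.

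To conclude, I would invoke the result from \cite{THNOAOG} stating that every hull set of an oriented graph must contain all extreme vertices; the same therefore holds for every geodetic set, since geodetic sets are hull sets. Combined with the fact that $\ext(D)$ is itself both a geodetic and a hull set, this forces $\ext(D)$ to be simultaneously the unique minimum geodetic set and the unique minimum hull set, yielding $\ohn(D) = \ogn(D) = |\ext(D)|$. The only delicate step is verifying that the backward-forward concatenation at $v$ is a simple directed path; everything else follows essentially from uniqueness of paths in a tree together with acyclicity of the orientation.
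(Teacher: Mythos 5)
Your proof is correct and follows essentially the same route as the paper: both arguments extend a non-extreme vertex backward to a source and forward to a sink (the paper phrases this as taking a maximal directed path through the vertex), use acyclicity and the uniqueness of paths in the underlying tree to see this path is a geodesic, and then invoke the fact that every hull set contains all extreme vertices to get minimality and uniqueness. The extra observation that no vertex of an oriented tree is transitive is fine but not needed for the argument.
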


\begin{proof}
    Let \( u \in V(D) \) be a non-extreme vertex and \( P = (v_1, \ldots ,u, \ldots ,v_2) \) a maximal (directed) path with \( u \) as an internal vertex.
    Due to the maximality of \( P \), we must have either \( d^-(v_1) = 0 \) or \( N^-(v_1) \subset V(P) \).
    Since the second alternative would imply that \( D \) has a cycle, which cannot happen in a tree, we must have \( d^-(v_1) = 0 \).
    Analogously, \( d^+(v_2) = 0 \).
    Thus, both \( v_1 \) and \( v_2 \) are extreme.
    Moreover, we know that there is only one \( v_1v_2 \)-path in the underlying tree of \( D \), which means that \( P \) is a geodesic.
    Therefore, \( I( \ext (D) ) = V(D) \).
\end{proof}

Thus, one can also observe that both the hull and the geodetic numbers of oriented trees, a subclass of oriented bipartite graphs, can be computed in linear time.

This result led us to work on the cacti, a superclass of trees.
A graph is called a \textit{cactus} if each block is either an edge or a cycle.
Consequently, every cycle is an induced cycle and two cycles intersect in at most one vertex.

For cacti, there are algorithms to compute the geodetic and hull numbers of a non-oriented cactus graph proposed in~\cite{CMGSPIG, OTHNOSGC}. As in the undirected case, the extreme vertices may not suffice in order to obtain a hull or a geodetic set of an oriented cactus.
It is necessary to include a few non-extreme vertices of some particular cycles.
We introduce below a few important notions in order to define such cycles.

In the remainder of this section, let $D$ be an oriented cactus graph.
Let \( C \subseteq D \) be a (oriented) cycle and \( u \in V(C) \) be a cut-vertex of $D$.
If there is an arc \( (u,v) \in A(D) \setminus A(C) \), we say that \( u \) is a \textit{transmitter} cut-vertex of \( C \) and use the initials TCV.
Analogously, if \( (v,u) \in A(D) \setminus A(C) \) we say that \( u \) is a \textit{receiver} cut-vertex and use the initials RCV.
Notice that we can have a cut-vertex which is both an RCV and a TCV.

A cycle \( C \subseteq D \) is called a \textit{leaf cycle} if it has only one cut-vertex.
We say that a \textit{trap cycle} is a directed cycle \( C \) such that its cut-vertices are either all transmitters and all not receivers; or they are all receivers, but not transmitters. In order words, $C$ is a trap cycle if it is a directed cycle and all edges with one endpoint in $C$ and the other in $D\setminus C$ are oriented the same way - either towards $C$ or towards $D\setminus C$. 
If \( C \) is a trap cycle with a TCV we say that it is a \textit{transmitter} trap cycle.
Similarly, a trap cycle with an RCV is a \textit{receiver} trap cycle.
At last, we say that a cycle \( C \) is \textit{unsatisfactory} if one of the following holds:
\begin{enumerate}[\textrm{Type} 1:]
    \item \( C \) is a trap cycle;
    \item \( C \) is a directed leaf cycle that is not a trap cycle;
    \item there are only two vertices in \( \ext(C) \), say \( u_1 \) e \( u_2 \), such that the two \( (u_1,u_2) \)-paths in \( C \) have different lengths and the longest one does not have internal cut vertices.
\end{enumerate}
When none of these happens, we say that the cycle is \textit{satisfactory}.
To shorten our text we also use the acronym UCi for ``unsatisfactory cycle of type i'', with \( i \in \{ 1,2,3 \} \).

Recall that any hull set must intersect any co-convex set.
Next we show that each unsatisfactory cycle contains a co-convex set and we describe these sets for each type of cycle.

\begin{lem}\label{lemma.unsatisfactory}
    Let \( D \) be an oriented cactus graph and \( C \) be an unsatisfactory cycle of $D$. Then, \( V(C) \) contains a co-convex set \( S \) and if \( C \) is of type:
    \begin{itemize}
        \item 1, then \( S = V(C) \);
        \item 2, then \( S = V(C) \setminus \{ w \} \) where \( w \) is the cut-vertex of \( C \);
        \item 3 with \( \ext(C) = \{ u,v \} \), then \( S \) consists of the internal vertices of the longest \( (u,v) \)-path.
    \end{itemize}
    Moreover, there is no intersection between any co-convex sets of different cycles.
\end{lem}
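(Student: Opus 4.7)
The plan is to verify co-convexity of the prescribed $S$ in each of the three types (i.e., to show that $V(D)\setminus S$ is convex) and then establish disjointness using the block structure of the cactus. In every case the key is that directed paths can only cross a cycle $C$ of a cactus at cut-vertices of $C$, so controlling which cut-vertices a directed path can enter or exit restricts where geodesics may go.

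For Type~1 (trap cycle) I would argue that $V(D)\setminus V(C)$ is convex. In a transmitter trap cycle every arc between $V(C)$ and $V(D)\setminus V(C)$ leaves $C$; dually, in a receiver trap cycle every such arc enters $C$. Therefore no directed path can both enter and leave $V(C)$, so no $(a,b)$-geodesic with $a,b \in V(D)\setminus V(C)$ visits $V(C)$. For Type~2 the candidate is $S = V(C)\setminus\{w\}$, where $w$ is the unique cut-vertex of the directed leaf cycle. Any path from outside $V(C)$ that enters $V(C)\setminus\{w\}$ must pass through $w$, and since $C$ is directed the only way to reach a cut-vertex from inside $V(C)\setminus\{w\}$ is to complete the entire cycle back to $w$, adding $|V(C)|$ arcs that could be saved by waiting at $w$; hence no geodesic with endpoints in $(V(D)\setminus V(C))\cup\{w\}$ uses $V(C)\setminus\{w\}$. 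For Type~3 let $P_L$ and $P_S$ be the longer and shorter $(u_1,u_2)$-paths of $C$, and take $S$ to be the interior of $P_L$. Since $u_1$ is the unique source and $u_2$ the unique sink of $C$, and $P_L$ has no internal cut-vertices, no vertex of $S$ has any arc outside $V(P_L)$, and inside $C$ its only outgoing arc continues along $P_L$. Thus a directed path with endpoints in $V(D)\setminus S$ that meets $S$ must enter at $u_1$, traverse all of $P_L$, and exit at $u_2$; replacing this segment by $P_S$ strictly shortens the path, contradicting that it is a geodesic.

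For the disjointness claim I would use the fact that two distinct cycles of a cactus share at most one vertex, which must be a cut-vertex. If cycles $C_i,C_j$ share a vertex $w$, then $w$ has an in-arc and an out-arc in $C_j$ that do not belong to $A(C_i)$, so $w$ is simultaneously a TCV and an RCV of $C_i$; hence $C_i$ cannot be a trap cycle. This rules out any intersection contributed by a Type~1 cycle. In the remaining types the description of $S$ explicitly avoids every cut-vertex of $C$: the unique cut-vertex $w$ in Type~2, and both $u_1,u_2$ and the interior of $P_S$ in Type~3 (which, because $P_L$ has no internal cut-vertices, contains all remaining cut-vertices of $C$). Therefore $S_i\cap S_j=\emptyset$ for distinct unsatisfactory cycles $C_i,C_j$.

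The main obstacle I expect is the Type~3 analysis, where one must handle subcases in which an endpoint of the geodesic lies on $P_S$ or coincides with $u_1$ or $u_2$, and explicitly use the hypothesis that $P_L$ has no internal cut-vertices to forbid any midway entry or exit from $S$. The other cases follow quite directly from the block structure of the cactus and the definitions of trap and directed leaf cycles.
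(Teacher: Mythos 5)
Your three co-convexity arguments are correct and essentially the paper's own: for a trap cycle a directed path cannot both enter and leave $V(C)$; for a directed leaf cycle every entry into $V(C)\setminus\{w\}$ and every exit must go through the unique cut-vertex $w$, which a path cannot visit twice (equivalently, the detour around the whole cycle is never part of a geodesic); and for type 3 the absence of internal cut-vertices on the long path forces any geodesic meeting its interior to contain the whole long $(u_1,u_2)$-path, which can be replaced by the shorter one. This matches the paper's proof, which reduces to geodesic segments with endpoints in $N(S)$ and all internal vertices in $S$.

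The one flawed step is in the disjointness part. Your claim that if $C_i$ and $C_j$ share a vertex $w$ then $w$ has an in-arc and an out-arc in $C_j$, so that $C_i$ cannot be a trap cycle, is false in general: $w$ may be a source or a sink of $C_j$, in which case both arcs of $C_j$ at $w$ point the same way and $w$ is only a TCV (or only an RCV) of $C_i$. For example, a directed triangle $C_i$ attached at $w$ to a cycle $C_j$ in which $w$ is a source is a perfectly good transmitter trap cycle sharing a vertex with $C_j$; so a type-1 cycle can intersect other cycles. The claim is valid (and is all that is needed) only when $C_j$ is itself directed, in particular when both cycles are trap cycles — that is exactly how the paper argues that two trap cycles cannot meet. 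The mixed pairs (type 1 with type 2 or 3, or any pair involving a type 2/3 cycle) are instead handled by the observation you already make: the co-convex sets of types 2 and 3 contain no cut-vertex of their cycle, while distinct cycles of a cactus meet only in a single cut-vertex, so those sets cannot intersect anything outside their cycle's interior. Rerouting the mixed case through that observation, rather than through the false general claim, repairs the argument and brings it in line with the paper.
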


\begin{proof}
    First let \( C \) be a receiver trap cycle, without loss of generality.
    Suppose that there are distinct vertices \( u_1,u_2 \in N(V(C)) \) and a \( (u_1,u_2) \)-geodesic \( P \) such that all its internal vertices are in \( V(C) \).
    We then have \( v \in V(C) \) such that \( (u_1,v) \in A(P) \), contradicting the choice of \( C \).

    Now let \( C \) be a UC2 and let \( w \in V(C) \) be its cut-vertex in \( D \).
    Thus, if there is a path \( P \) as the one suggested in the previous paragraph then \( w \) is the only vertex in \( V(C) \cap V(P) \).

    At last, let \( C \) be a UC3 and let \( v_1,v_2 \) be the extreme vertices in \( C \), respectively the source and the sink.
    Denote by \( P_1 \) and \( P_2 \) respectively the shortest and the longest \( (v_1,v_2) \)-paths in \( C \); we prove next that \( V(P_2) \setminus \{ v_1,v_2 \} \) is a co-convex set.
    Now take \( u_1,u_2 \in N(V(P_2) \setminus \{ v_1,v_2 \}) \) and \( P \) a \( (u_1,u_2) \)-path as the one in the first paragraph.
    By the definition of \( C \) we have that \( u_1 = v_1 \) and \( u_2 = v_2 \), contradicting the choice of \( u_1 \) and \( u_2 \).
    Since that is not a geodesic, the result follows.

    We conclude this demonstration by proving that these co-convex sets do not intersect.
    Two trap cycles cannot intersect because if they did, the common vertex would be an RCV and a TCV for both cycles.
    If one of the cycles is not a trap cycle then there is no intersection, since its co-convex set does not contain a cut-vertex.
\end{proof}

We have proven the necessity of having at least one vertex from each unsatisfactory cycle in any hull set.
Now for the main result, we have left to show that these along with \( \ext (D) \) are enough.

\begin{lem}\label{lemma.paths}
    Let \( D \) be an oriented cactus and \( u \in V(D) \).
    For every \( v \in N^+(u) \) (\( v \in N^-(u) \)) there is a maximal path \( P = (v_1, \ldots ,v_q) \) with \( u=v_1 \) and \( v = v_2 \) (\( u=v_q \) and \( v = v_{q-1} \)) such that \( v_q \) (\( v_1 \)) either is extreme or belongs to an unsatisfactory cycle \( C \) of type either 1 or 2.
    Moreover, all the vertices of \( C \) are in \( V(P) \).
\end{lem}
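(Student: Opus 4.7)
The plan is to construct $P$ by greedy forward extension from $(u,v)$, using a preference rule that respects the block structure of $D$. Without loss of generality I treat only $v\in N^+(u)$; the case $v\in N^-(u)$ follows by reversing all arcs of $D$.

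I would initialize $P_0=(u,v)$, and at each stage, given $P=(v_1,\ldots,v_q)$, try to extend: if $v_q$ admits an out-neighbour $w\notin V(P)$, pick such a $w$ preferring $(v_q,w)$ to lie in a block different from the block containing $(v_{q-1},v_q)$, and among those, preferring bridge blocks over cycle blocks; otherwise stop. Because $D$ is finite the process terminates at some $P=(v_1,\ldots,v_q)$. If $N^+(v_q)=\emptyset$, then $v_q$ is a sink, hence extreme, and we are done.

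Otherwise $N^+(v_q)\subseteq V(P)$. Here I would invoke the cactus property (blocks are induced cycles, any two blocks share at most one vertex) to argue that $v_q$ has exactly one out-neighbour in $V(P)$: two distinct such out-neighbours $v_{j_1},v_{j_2}$ with $j_1<j_2$ would yield nested cycles $(v_{j_1},\ldots,v_q,v_{j_1})$ and $(v_{j_2},\ldots,v_q,v_{j_2})$ sharing more than one vertex, contradicting the cactus property. Hence there is a unique $v_j$ with $(v_q,v_j)\in A(D)$, and $C=(v_j,v_{j+1},\ldots,v_q,v_j)$ is a directed cycle forming a block of $D$; clearly $V(C)\subseteq V(P)$.

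The main step is to show $C$ is unsatisfactory of type $1$ or $2$, which I would do by pinning down the cut-vertices of $C$. The central subclaim is that for $j<k<q$ the vertex $v_k$ cannot be a TCV of $C$. Suppose it were, with external out-neighbour $w$ (so $w\notin V(C)$ and $(v_k,w)$ lies in a block distinct from $C$). If $w\in V(P)$ at the moment we reached $v_k$, then $w=v_i$ for some $i<j$, and the closed walk $v_k\to v_i\to v_{i+1}\to\cdots\to v_j\to\cdots\to v_k$ would be a cycle of $D$ using arcs from at least two different blocks—impossible in a cactus. Hence $w\notin V(P)$ then, and the preference rule would have extended to $w$ instead of continuing inside $C$, contradicting the fact that $P$ traced $C$ all the way to $v_q$. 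Moreover $v_q$ is not a TCV of $C$ since its only out-arc $v_q\to v_j$ lies in $C$.

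It remains to treat the entry vertex $v_j$. Here I would exploit the bridge-first priority of the extension rule together with the same cactus obstruction: any TCV exit from $v_j$ leading to an unvisited vertex would have been taken (by preference over entering a directed cycle block $C$), while an exit to an already-visited vertex gives a cycle through multiple blocks, again contradicting the cactus property. Consequently every cut-vertex of $C$ other than possibly $v_j$ is RCV-only, and $C$ is either a leaf cycle—hence UC1 (trap) or UC2 (directed leaf, non-trap)—or a cycle whose cut-vertices are all RCV-only, i.e.\ a trap cycle (UC1). The hardest part is making the case analysis at the entry vertex $v_j$ airtight, in particular ruling out configurations where $v_j$ is both a TCV and RCV while $C$ also has other cut-vertices; the cactus property together with the preference rule is what I expect to make this work.
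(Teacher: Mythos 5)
Your reduction to the case \(N^+(v_q)\subseteq V(P)\), the uniqueness of the return arc \((v_q,v_j)\), and the argument that no internal vertex \(v_k\) with \(j<k<q\) can be a TCV of \(C\) are all sound and essentially mirror the first paragraph of the paper's proof. The genuine gap is exactly the one you flag at the entry vertex \(v_j\), and it is not a matter of tightening the case analysis: your preference rule cannot close it. The rule only distinguishes ``different block vs.\ same block'' and ``bridge vs.\ cycle'', so at \(v_j\) it is allowed to enter \(C\) even when \(v_j\) also has an unvisited out-neighbour in a \emph{sibling cycle block} \(C'\). Concretely, take \(V(D)=\{u,a,b,c,x,y,z,p\}\) with arcs \((u,a)\), the directed triangle \((a,b),(b,c),(c,a)\), the directed square \((a,x),(x,y),(y,z),(z,a)\), and \((p,x)\). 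This is an oriented cactus, and starting from \(P_0=(u,a)\) your rule may legally choose \((a,x)\) (both \((a,b)\) and \((a,x)\) leave the entry block and neither is a bridge), producing the maximal path \((u,a,x,y,z)\). Its terminal cycle \(C=(a,x,y,z,a)\) is directed, has two cut-vertices (\(a\), which is both a TCV and an RCV, and \(x\), an RCV), hence is neither a trap nor a leaf cycle: it is \emph{satisfactory}, so your constructed path does not witness the lemma (the correct witness here is \((u,a,b,c)\), whose terminal triangle is a UC2). So the situation you hoped to exclude --- \(v_j\) a TCV while \(C\) has further RCV-only cut-vertices --- really occurs for the greedy path, and no purely local tie-breaking at \(v_j\) can see which sibling cycle leads to an unsatisfactory terminal cycle.

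The paper resolves this with a global, extremal argument rather than a local rule: among \emph{all} maximal paths beginning with the arc \((u,v)\), it assumes for contradiction that every one ends in a satisfactory (necessarily directed) terminal cycle, and picks a path \(P\) meeting the largest number of cycles. Since the terminal cycle is satisfactory and \(v_q\) has out-degree one, it has a TCV \(w_1\neq v_q\) with an out-arc \((w_1,w_2)\) leaving the cycle; rerouting \(P\) at \(w_1\) through \(w_2\) and extending maximally yields a path meeting strictly more cycles, contradicting the extremal choice. To repair your proof you need an ingredient of this kind (an extremal choice over all maximal paths, or an induction on the block/cut tree rooted at the entry vertex); with only the stated preference rule the argument at \(v_j\) cannot be made airtight.
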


\begin{proof}
    Consider all of the maximal paths (starting in \( u \)) of the form \( P = (u,v_1 = v,v_2, \ldots ,v_q) \).
    We have two options: either \( N^+(v_q) = \emptyset \) or \( N^+(v_q) \subseteq V(P) \).
    If the first one occurs then \( v_q \) is extreme.
    Else we have \( N^+(v_q) \subseteq V(P) \), and since \( D \) is a cactus we must also have \( d^+(v_q) = 1 \).
    Moreover we have a directed cycle \( C \) whose vertices are all in \( V(P) \).
    If \( C \) is either a UC1 or a UC2 then we are done.
    Also notice that \( C \) cannot be a UC3, since it is directed.

    Assume that every maximal path as described above ends in a satisfactory cycle \( C \).
    Take \( P \) as a maximal path which intersects the largest number of cycles in \( D \) and let \( C_1, \ldots ,C_n \) be those cycles such that \( C_i \) is the \( i^{\textrm{th}} \) cycle that \( P \) intersects (with respect to its orientation).
    Notice that all of these cycles are pairwise different, because if not we would have a block which would neither be a vertex nor a cycle.
    Since \( C_n \) is satisfactory and directed, there is an arc \( (w_1,w_2) \in A(D) \) with \( w_1 \in V(C_n) \setminus \{ v_q \} \) and \( w_2 \notin V(C_n) \).
    Therefore we can take another maximal path \( P^* = (u,v,v_2, \ldots ,w_1,w_2, \ldots ,v_{q'}) \) ending in a satisfactory directed cycle such that \( P \cap P^* = (u,v,v_2, \ldots ,w_1) \).
    It is easy to see that \( P^* \) intersects at least one more cycle than \( P \), thus contradicting the choice of the latter.

    For the maximal paths \( (v_0,v_1, \ldots ,v,u) \) the argument is analogous.
\end{proof}

With that we can obtain a path between two vertices in any hull set with \( v \) as an internal vertex, but it does not guarantee the existence of a geodesic.
However, for some vertices if there is a geodesic not containing \( v \) then that would create a block that is not allowed in a cactus graph.
Next we present some restrictions for these paths.

\begin{lem}\label{lemma.pathsparts}
    Let \( D \) be an oriented cactus graph with \( C \subseteq D \) a cycle, \( u,v \in V(D) \) and \( P\subseteq D \) a \( (u,v) \)-path.
    \begin{enumerate}
        \item If there is a \( w \in V(P) \) that does not belong to any cycle of \( D \) then every \( (u,v) \)-path contains \( w \).
        \item Let \( (w_{1}, \ldots ,w_{q}) = P \cap C \) with \( q \geq 2 \). If $P^*$ is a \( (u,v) \)-path in $D$, then $P^*\cap C$ is a $(w_1,w_q)$-path.
    \end{enumerate}
\end{lem}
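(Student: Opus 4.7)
For part 1, the plan is to exploit that in a cactus any vertex lying in no cycle is incident only to bridges: each block containing such a vertex is an edge, not a cycle. So if $w$ is not in any cycle of $D$, every edge of the underlying graph at $w$ is a bridge. If $w\in\{u,v\}$ the claim is trivial, so I would assume $w$ is an internal vertex of $P$. Then $w$ has two neighbours in $P$, and the two corresponding edges are bridges; removing $w$ from the underlying graph thus disconnects the $u$-side of $P$ from its $v$-side, placing $u$ and $v$ in different components of the underlying graph minus $w$. Every $(u,v)$-path in $D$ projects to a $u$-$v$ walk in the underlying graph and must therefore use $w$.

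For part 2, the first step is to pin down $w_1$ and $w_q$ via the block-cut structure of the underlying cactus. Since $q\geq 2$, the vertices $w_1$ and $w_q$ are distinct, $w_1$ is the first vertex of $C$ met by $P$ (starting from $u$), and $w_q$ is the last (before reaching $v$). If $u\in V(C)$ then $u=w_1$; otherwise $u$ lies in some block distinct from $C$ and there is a unique cut vertex of $C$ on the block-cut-tree path from $u$'s block to $C$, which must coincide with $w_1$ since $P$ passes through it on first reaching $C$. Applying the same argument to $v$ forces $w_q$. In particular, every $(u,v)$-path in $D$ meets $C$, enters at $w_1$, and exits at $w_q$; thus $\{w_1,w_q\}\subseteq V(P^*\cap C)$.

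The main obstacle is to show that $P^*\cap C$ is a single directed subpath rather than a disjoint union, i.e.\ that $P^*$ cannot leave $C$ and later return. The plan is a contradiction argument using the defining cactus property. Suppose $P^*$ leaves $C$ at some vertex $x$ and later re-enters at some vertex $y$; since $P^*$ has no repeated vertex, $x\neq y$. The subpath of $P^*$ strictly between $x$ and $y$ lies in $D-V(C)$ apart from its endpoints $x,y\in V(C)$, so concatenating it in the underlying graph with one of the two arc-sequences of $C$ between $y$ and $x$ yields a simple cycle of the underlying graph that shares the two distinct vertices $x,y$ with $C$, contradicting the fact that two distinct cycles of a cactus intersect in at most one vertex. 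Hence $P^*\cap C$ is a connected subpath of $P^*$, and by the previous paragraph it is a $(w_1,w_q)$-path.
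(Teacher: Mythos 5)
Your proof is correct, and it rests on the same foundation as the paper's argument, namely the block structure of a cactus, but the packaging differs enough to be worth noting. For part~1 the paper takes a hypothetical $(u,v)$-path avoiding $w$, looks at consecutive common vertices of $P$ and $P^*$, and assembles from the two internally disjoint segments a cycle through $w$; you instead observe that every block containing $w$ is an edge, so the two edges of $P$ at $w$ are bridges and $w$ is a cut vertex separating $u$ from $v$ -- the same underlying fact reached by a separation argument rather than an explicit cycle construction (it would be worth one extra line to note that if the two neighbours of $w$ on $P$ were still connected in $D-w$, the edge from $w$'s predecessor would lie on a cycle through $w$, which justifies the ``disconnects'' claim). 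For part~2 the paper first shows $P^*$ must meet $C$, then pins the entry vertex by taking the last vertex $u'$ common to $P$ and $P^*$ before $C$ and arguing that $C$ together with the two internally disjoint $(u',w_1)$- and $(u',w_1')$-paths would sit in a block that is neither an edge nor a cycle; you read the forced entry and exit vertices off the block--cut tree and then separately rule out $P^*$ leaving $C$ and returning, using that two cycles of a cactus share at most one vertex. A genuine plus of your version is that it makes explicit the connectedness of $P^*\cap C$ (no exit-and-return), a point the paper's proof leaves implicit even though the stated conclusion needs it; the paper's divergence-vertex argument, on the other hand, is more self-contained in that it does not appeal to block--cut tree facts.
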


\begin{proof}
    In the first case, by contradiction suppose that there exists a \( (u,v) \)-path \( P^* \) that does not contain \( w \).
    Let \( v_1,v_2, \ldots ,v_r \) be the vertices which are in both \( P \) and \( P^* \), ordered according to the orientation of \( P \).
    Take \( i \in \{ 1, \ldots ,r-1 \} \) such that \( w \) lies in the \( (v_i,v_{i+1}) \)-path contained in \( P \).
    The two paths between \( v_i \) and \( v_{i+1} \) (the ones contained in \( P \) and in \( P^* \)) are internally disjoint, thus together they form a cycle.
    This contradicts the choice of \( w \).

    For the second, also by contradiction assume first that there is another \( (u,v) \)-path \( P^* \subset D \) that does not intersect \( C \).
    Using arguments analogous to the ones above we conclude that there is a block containing \( C \) which is not a cycle.

    Now suppose that there is an \( (u,v) \)-path \( P^* \subset D \) intersecting \( C \), with \( P'' := P^* \cap C \) such that its first vertex is \( w_1' \neq w_1 \).
    Since \( u \in V(P) \cap V(P^*) \) let \( u' \) be the last vertex (with respect to the orientation of \( P \)) in that intersection before the cycle \( C \).
    Therefore \( C \), the \( (u',w_1) \)-path contained in \( P \) and the \( (u',w_1') \)-path contained in \( P^* \) are in the same block, which again is a contradiction to the fact that \( D \) is a cactus.
    The argument for \( w_q \) in the statement of this lemma is analogous.
\end{proof}

Combining the arguments provided by Lemmas~\ref{lemma.unsatisfactory},~\ref{lemma.paths} and~\ref{lemma.pathsparts}, one may deduce how to obtain a minimum hull set of an oriented cactus.
In the proof of Theorem~\ref{teo.hcactus} we analyse each type of cycle, showing which vertices of the unsatisfactory ones to take for the hull set.
Before we proceed, it is important to explain how the extreme vertices appear on cycles.

\begin{lem}\label{lemma.extcycles}
    Let \( C \) be an oriented cycle.
    \begin{enumerate}[1.]
        \item if \( |V(C)| = 3 \), then either \( \ext (C) = V(C) \) and it has one extreme vertex of each kind or it is a directed cycle without extreme vertices;
        \item if \( |V(C)| \geq 4 \), it has no transitive vertices and the number of sources is equal to the number of sinks.
    \end{enumerate}

    \begin{proof}
        First let \( C \) be a cycle with three vertices \( u,v,w \).
        If \( u \) is transitive then \( (v,u),(u,w),(v,w) \in A(C) \) and thus \( v \) is a source and \( w \) is a sink.
        If \( u \) is a source then \( (u,v),(u,w) \in A(C) \); consequently, it is straightforward that the other two vertices will be a sink and a transitive.
        The argument is analogous for the case where \( u \) is a sink.
        Last we assume that \( u \) is not extreme, where we have \( (w,u),(u,v),(v,w) \in A(C) \) without loss of generality; notice that \( C \) is a directed cycle and has no extreme vertices.

        Now we work on the case where \( |V(C)| \geq 4 \).
        If there is a transitive vertex \( v \) in \( V(C) \), then there are at least \( u,w \in V(C) \) such that \( (u,v),(v,w),(u,w) \in A(C) \).
        But then \( C \) would only have these three vertices, contradicting our assumption; therefore, \( C \) has no transitive vertices.
        Notice now that the number of arcs in \( C \) is equal to the sum of the indegrees of the vertices and to the sum of the outdegrees of the vertices.
        For a non-extreme vertex of \( C \), both the indegree and the outdegree are one; for a source, the indegree is zero and the outdegree is two; and for a sink, the indegree is two and the outdegree is zero.
        Hence we conclude that the number of sources is equal to the number of sinks in \( C \).
    \end{proof}
\end{lem}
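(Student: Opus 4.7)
The plan is to treat the two parts separately, since they require genuinely different arguments, and to handle Part 1 by a small case analysis and Part 2 by a degree-counting argument.

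For Part 1, I would simply enumerate the orientations of a $3$-cycle on vertices $\{u,v,w\}$ up to isomorphism. Up to relabeling, there are exactly two orientations of $K_3$: the directed (cyclic) one, say with arcs $(u,v),(v,w),(w,u)$, and the transitive one, say with arcs $(u,v),(v,w),(u,w)$. In the cyclic case every vertex has in-degree $1$ and out-degree $1$, and its unique in-neighbor and unique out-neighbor are joined by an arc oriented the \emph{wrong way} to satisfy the transitive condition (e.g.\ for $v$: in-neighbor $u$, out-neighbor $w$, but $(u,w)\notin A(C)$ since instead $(w,u)\in A(C)$); hence no vertex is extreme. In the transitive case $u$ is a source, $w$ is a sink, and $v$ is transitive because $(u,w)\in A(C)$, giving one vertex of each type.

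For Part 2, I would first prove there are no transitive vertices. Suppose, for contradiction, that $v\in V(C)$ is transitive. Because $C$ is a cycle, $v$ has exactly one in-neighbor $u$ and one out-neighbor $w$ in $C$, and by the transitive property $(u,w)\in A(C)$. But then $\{u,v,w\}$ induces a triangle inside $C$, forcing $|V(C)|=3$ and contradicting $|V(C)|\ge 4$.

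For the equality between the number of sources and sinks, I would use a simple handshake-style count. Let $s$ be the number of sources, $t$ the number of sinks, and $n=|V(C)|$; since there are no transitive vertices, the remaining $n-s-t$ vertices have in-degree $1$ and out-degree $1$. Summing in-degrees and out-degrees yields
\begin{equation*}
    |A(C)| = 2t + (n-s-t) = 2s + (n-s-t),
\end{equation*}
both of which equal $n$, and equating the two expressions gives $s=t$. I do not anticipate a real obstacle here; the only delicate point is the triangle case, where one must carefully verify that the directed orientation produces no extreme vertex (in particular, it has no transitive vertex, despite every pair of remaining vertices being adjacent), which is resolved by noting that the arc between the in- and out-neighbor of any vertex is oriented \emph{backwards}.
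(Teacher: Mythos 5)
Your proposal is correct and follows essentially the same route as the paper: a short case analysis on the triangle (your enumeration of the two orientations of $K_3$ up to isomorphism is just a repackaging of the paper's case split on the type of a fixed vertex), then for $|V(C)|\geq 4$ the same two steps — a transitive vertex would force a triangle inside the cycle, and a degree count with sources of out-degree $2$, sinks of in-degree $2$, and all other vertices of in- and out-degree $1$ gives the equality of sources and sinks.
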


\begin{thm}\label{teo.hcactus}
    Let $D$ be an oriented cactus graph.
    Then there exists a minimum hull set $S$ of $D$ composed by the extreme vertices of $D$ and by exactly one non-extreme vertex of each unsatisfactory cycle.
    Moreover, $I(S)$ contains all the vertices that are not in a satisfactory cycle and $I^2(S) = V(D)$.
\end{thm}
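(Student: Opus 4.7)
The plan is to construct an explicit candidate minimum hull set $S$, match its size with the lower bound from Lemma~\ref{lemma.unsatisfactory}, and verify $I^2(S)=V(D)$.

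First I choose one non-extreme vertex $s_C$ in each unsatisfactory cycle $C$, tailored to its type. For a Type~1 trap cycle, say a transmitter trap cycle with a TCV $t$, I take $s_C$ to be the immediate successor of $t$ along the orientation of $C$, so that a forward traversal from $s_C$ visits every vertex of $C$ before exiting at $t$ (receiver trap cycles are symmetric, with $s_C$ the predecessor of an RCV). For a Type~2 cycle with unique cut-vertex $w$, I pick the in-neighbour of $w$ inside $C$, so that a forward traversal from $s_C$ around $C$ ends at $w$. For a Type~3 cycle with $\ext(C)=\{u_1,u_2\}$, I take any internal vertex of the longer $(u_1,u_2)$-path. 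I set $S=\ext(D)\cup\{s_C:C\text{ unsatisfactory}\}$. The lower bound follows at once from Lemma~\ref{lemma.unsatisfactory}: the co-convex sets $K_C\subseteq V(C)\setminus\ext(D)$ are pairwise disjoint, and every hull set must contain $\ext(D)$ and intersect every $K_C$, so it has size at least $|S|$.

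The main content is showing $I(S)\supseteq V(D)\setminus\{\text{vertices of satisfactory cycles}\}$. For $v$ in an unsatisfactory cycle $C$, the design of $s_C$ together with Lemma~\ref{lemma.paths} yields a directed path starting at $s_C$, covering every vertex of $C$, and exiting through a cut-vertex of the appropriate type to reach another vertex $s_2\in S$. This path is the unique $s_C$-to-$s_2$ route through $C$, because in a cactus no alternative route can re-enter $C$ through a cut-vertex of the wrong orientation. Hence the path is a geodesic containing $v$, and $v\in I(S)$; Type~3 cycles are handled analogously with the endpoints $u_1$ and $u_2$ of the longer path playing the role of the external $S$-vertices on either side of $s_C$. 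If instead $v$ is non-extreme and lies on no cycle, then $v$ is a cut-vertex; choosing $u_1\in N^-(v)$ and $u_2\in N^+(v)$ with $(u_1,u_2)\notin A(D)$ and applying Lemma~\ref{lemma.paths} in both directions gives maximal paths whose endpoints are extreme or in Type~1/2 UCs, and each UC-endpoint may be replaced by the corresponding $s_C\in S$ along the path. Since $v$ is on no cycle, Lemma~\ref{lemma.pathsparts}(1) forces every path between the resulting endpoints $s_1,s_2\in S$ to contain $v$; in particular every $(s_1,s_2)$-geodesic does, giving $v\in I(S)$.

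To finish with $I^2(S)=V(D)$ I consider a vertex $v$ in a satisfactory cycle $C$. By Lemma~\ref{lemma.extcycles}, $C$ has extreme vertices of $C$; each of them is either extreme in $D$ (hence in $S\subseteq I(S)$) or a cut-vertex of $D$ already placed in $I(S)$ by the previous step. Because $C$ is not of Type~1, 2, or 3, the arcs of $C$ joining consecutive extremes of $C$ are forced geodesics (the cactus property forbids the alternative routing through two distinct cut-vertices of $C$), and their union is $V(C)$; hence $V(C)\subseteq I^2(S)$. Combining the three steps, $S$ is a hull set of the claimed size, which by the first step is minimum. The principal obstacle will be the case analysis for Type~1 unsatisfactory cycles: one must verify simultaneously that the chosen $s_C$ lies on a geodesic through every remaining vertex of $C$ and that all reachability and geodesic claims survive the absence of backward arcs characteristic of trap cycles.
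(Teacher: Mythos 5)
Your handling of the unsatisfactory cycles and of the non-extreme vertices lying on no cycle essentially reproduces the paper's argument (choice of one vertex per UC via its cut-vertex structure, Lemmas~\ref{lemma.unsatisfactory}, \ref{lemma.paths} and \ref{lemma.pathsparts}, and the disjointness lower bound), up to two small imprecisions: the constructed path need not itself be a geodesic (what you need, and what Lemma~\ref{lemma.pathsparts} gives, is that \emph{every} path between the two chosen $S$-vertices has the same intersection with $C$, so every geodesic contains it), and for a UC3 you never account for the interior of the \emph{shorter} $(u_1,u_2)$-path, which must also land in $I(S)$ (via $(w_1,w_2)$-geodesics whose intersection with $C$ is the shorter path).

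The genuine gap is the satisfactory-cycle step, i.e.\ precisely the part that justifies $I^2(S)=V(D)$. First, a satisfactory cycle can be a directed cycle that is neither a trap nor a leaf cycle; by Lemma~\ref{lemma.extcycles} such a cycle has \emph{no} extreme vertices of $C$ at all, so your phrase ``the arcs of $C$ joining consecutive extremes of $C$'' refers to nothing, and your argument says nothing about these cycles. (The paper covers them using a receiver cut-vertex $v_1$ and a transmitter cut-vertex $v_2$: the $(v_1,v_2)$-path is forced into $I(S)$ by geodesics between hull-set vertices, and the $(v_2,v_1)$-path only enters at the second iteration.) Second, for a satisfactory cycle with exactly one source $u_1$ and one sink $u_2$ whose two $(u_1,u_2)$-paths have different lengths — satisfactory exactly because the longer path has an internal cut-vertex, so it escapes Type~3 — the longer path is \emph{not} a geodesic, so ``forced geodesics whose union is $V(C)$'' is false; its interior is reached only by splitting at an internal cut-vertex $v$ (a geodesic leaving through $v$'s external arc puts the $(u_1,\ldots,v)$-portion in $I(S)$, and the unique $(v,u_2)$-path is then a geodesic between vertices of $I(S)$, giving $I^2(S)$). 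Third, you assert that extreme vertices of $C$ that are cut-vertices of $D$ are ``already placed in $I(S)$ by the previous step'', but your previous step only treated unsatisfactory cycles and vertices on no cycle; showing that, say, the source $u_1$ of a satisfactory cycle lies in $I(S)$ requires a separate application of Lemmas~\ref{lemma.paths} and~\ref{lemma.pathsparts} through $u_1$'s external in-neighbour. Without these three pieces the claim $I^2(S)=V(D)$ — and hence that $S$ is a hull set at all — is not established.
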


\begin{proof}
    Besides the extreme vertices, by Lemma \ref{lemma.unsatisfactory} a hull set of \( D \) must also contain at least one non-extreme from each unsatisfactory cycle.
    Next we show how to obtain a hull set \( S \) with only these vertices, which consequentially will be minimum.

    Let \( C \subset D \) be a receiver trap cycle (the argument for transmitter trap cycles is analogous).
    Take \( u,u' \in V(C) \) such that \( (u,u') \in A(C) \) and \( u' \) is a cut-vertex, which means that there is \( w \in N^-(u') \setminus V(C) \).
    By Lemma \ref{lemma.paths}, there is a path \( P = (v, \ldots ,w,u') \) such that \( V(C) \cap V(P) = u' \) and \( v \) is either extreme or lies in a cycle \( C' \), which is either an UC1 or an UC2, with \( V(C') \subset V(P) \).
    If \( v \) is extreme then call it \( v' \) from now on.
    If the second case happens, by Lemma \ref{lemma.unsatisfactory} we must have a vertex of \( C' \) in every hull set; let \( v' \) be that vertex and let \( P' \subseteq P \) be the \( (v',u') \)-path.
    In both events, we can extend \( P '\) to get a \( (v',u) \)-path which contains every vertex of \( C \) since it is directed and \( (u,u') \in A(D) \).
    By Lemma \ref{lemma.pathsparts} every \( (v',u) \)-path arrives at \( C \) by the vertex \( u' \), including the geodesics.
    Therefore, by adding \( u \) to \( S \) we have \( V(C) \subseteq I(S) \).

    Now let \( C \) be an UC2 and let \( v \in V(C) \) be its cut-vertex.
    Since \( C \) is not a trap cycle there are \( v_1,v_2 \in V(D) \setminus V(C) \) such that \( (v_1,v),(v,v_2) \in A(D) \).
    Taking an arbitrary \( u \in V(C) \setminus \{ v \} \), by Lemma \ref{lemma.paths} there are \( w_1,w_2 \) in the hull set \( S \) such that we have a \( (w_1,u) \)-path and an \( (u,w_2) \)-path.
    Notice that each vertex of \( C \) is in at least one of these paths.
    Moreover by Lemma \ref{lemma.pathsparts} we deduce that each vertex of \( C \) other than \( u \) and \( v \) lies either in a \( (w_1,u) \)-geodesic or in an \( (u,w_2) \)-geodesic.
    Therefore adding \( u \) to \( S \) results in \( V(C) \subseteq I(S) \).

    Next we analyse the case where \( C \) is an UC3, such that \( u_1 \in V(C) \) is a source in \( C \) and \( u_2 \in V(C) \) is a sink in \( C \).
    Take \( v \) an arbitrary interior vertex of the longest \( (u_1,u_2) \)-path in \( C \).
    If \( u_1 \) is also an extreme vertex in \( D \) we take \( w_1 = u_1 \).
    If not then we have \( u_1' \in N^-(u_1) \setminus V(C) \).
    By Lemma \ref{lemma.paths} there are a vertex \( w_1 \) either extreme or lying in an UC1 or in an UC2 and a path \( (w_1, \ldots ,u_1',u_1) \).
    Define \( w_2 \) analogously.
    Thus we have an \( (w_1,w_2) \)-path intersecting \( C \) (that intersection being an \( (u_1,u_2) \)-path), a \( (w_1,u) \)-path and an \( (u,w_2) \)-path.
    By Lemma \ref{lemma.pathsparts} every \( (w_1,w_2) \)-geodesic contains the \( (u_1,u_2) \)-geodesic and we also have a \( (w_1,u) \)-geodesic and an \( (u,w_2) \)-geodesic.
    Therefore \( V(C) \subseteq I(S) \).

    Take a vertex \( v \) which is not in any cycle and neither is extreme.
    Thus its indegree and outdegree are positive.
    By the Lemmas \ref{lemma.paths} and \ref{lemma.pathsparts} there is an \( (u,w) \)-geodesic containing \( v \) with \( u,w \in S \).
    We then have \( v \in I(S) \).

    This leaves the satisfactory cycles to be analyzed.
    First let \( C \) be a cycle with only three vertices \( u,v,w \).
    By Lemma \ref{lemma.extcycles}, we have two possibilities.
    If it is directed, since it is neither a trap nor a leaf cycle there are distinct RCV and TCV, suppose \( u \) and \( v \) respectively.
    By Lemmas \ref{lemma.paths} and \ref{lemma.pathsparts} there are \( w_1,w_2 \in S \) such that a \( (w_1,w_2) \)-geodesic contains the \( (u,v) \)-geodesic.
    If \( w \) is internal to the latter, then \( V(C) \subseteq I(S) \); else it is internal to the \( (v,u) \)-path, and since \( u,v \in I(S) \) we have \( V(C) \in I^2(S) \).
    Now assume that \( u \) is a source, \( w \) is a sink and \( v \) is transitive.
    By the same Lemmas mentioned before, we have \( u,w \in I(S) \).
    If \( v \) is also extreme in \( D \) then \( w \subseteq S \), else it is a cut-vertex of \( D \) and we can use the pair of lemmas to conclude that \( w \in I(S) \).

    From now on \( C \) will be a cycle with at least four vertices.
    If it has at least four extreme vertices, take an arbitrary vertex \( v \) of \( C \).
    Let \( P \) be a maximal path in \( C \) containing \( v \), then it is an \( (u_1,u_2) \)-path with \( u_1,u_2 \) respectively being a source and a sink in \( C \).
    Define \( w_1 \) and \( w_2 \) like in the case where \( C \) is an UC3.
    Then by the Lemmas \ref{lemma.paths} and \ref{lemma.pathsparts} we have a \( (w_1,w_2) \)-geodesic containing the \( (u_1,u_2) \)-path in \( C \).
    Applying this argument to every vertex of \( C \), we deduce that \( V(C) \subseteq I(S) \).

    If there are only one source \( u_1 \) and only one sink \( u_2 \) in \( C \), take \( w_1,w_2 \) as above.
    Thus for each \( (u_1,u_2) \)-path there is a \( (w_1,w_2) \)-path containing it.
    If both paths are geodesics we can find a \( (w_1,w_2) \)-geodesic for each one, which gives us \( V(C) \subseteq I(S) \).
    Else we analyse the possibilities for the longest path.
    In any other case let \( v \) be a cut-vertex internal to the longest \( (u_1,u_2) \)-path.
    Without loss of generality let \( (v,v_1) \in A(D) \setminus A(C) \).
    By the Lemmas \ref{lemma.paths} and \ref{lemma.pathsparts} there are \( w_3 \in S \) and a \( (w_1,w_3) \)-geodesic containing the \( (u_1,v) \)-geodesic.
    We then have that all the vertices in this geodesic and the ones in the \( (u_1,u_2) \)-geodesic are in \( I(S) \).
    Since \( u_2,v \in I(S) \) the vertices in the \( (v,u_2) \)-path (which is a geodesic by uniqueness) are in \( I^2(S) \), from where we conclude that \( V(C) \subseteq I^2(S) \).    

    For the last case let \( C \) be a directed cycle which is neither trap nor leaf.
    Thus there are distinct cut vertices \( v_1,v_2 \in V(C) \) and \( (v_1',v_1),(v_2,v_2') \in A(D) \setminus A(C) \).
    By the Lemmas \ref{lemma.paths} and \ref{lemma.pathsparts} there are \( w_1,w_2 \in S \) such that every \( (w_1,w_2) \)-path contains the \( (v_1,v_2) \)-path \( P \), which implies that \( V(P) \subseteq I(S) \).
    Therefore, since \( v_1,v_2 \in I(S) \) all the vertices in the \( (v_2,v_1) \)-path are in \( I^2(S) \).
    
\end{proof}

The proof argues which vertex must be chosen in each unsatisfactory cycle.
All such vertices can be found in linear time.
Thus, $\ohn(D)$ also can be found in linear time, for every oriented cactus $D$.

Theorem~\ref{teo.hcactus} motivated us also to work on the geodetic number for these oriented graphs.
Since a geodetic set is also a hull set, every geodetic set must have a non-extreme vertex of each unsatisfactory cycle.
Besides, as only some satisfactory cycles may have vertices that are not obtained in the first iteration of the interval function, we studied such cycles in order to obtain a minimum geodetic set.
As a result, we define the \textit{falsely satisfactory} cycles, FSC for short.
These can be of two types:
\begin{enumerate}[\textrm{Type} 1:]
    \item \( \ext (C) = \{ u_1,u_2 \} \), \( u_1 \) is a source and \( u_2 \) is a sink.
    The \( (u_1,u_2) \)-paths have distinct lengths, \( P \) being the longest.
    \( P \) has length at least three and one of its internal vertices is a cut-vertex in \( D \).
    Besides, all the following statements hold:
    \begin{enumerate}[(1)]
        \item If there is an RCV \( v_1 \) internal to \( P \), the \( (u_1,v_1) \)-path must have length at least two;
        \item If there is a TCV \( v_2 \) internal to \( P \), the \( (v_2,u_2) \)-path must have length at least two;
        \item If there are both an RCV \( v_1 \) and a TCV \( v_2 \) internal to \( P \), we must have \( P = (u_1, \ldots ,v_2, \ldots ,v_1, \ldots ,u_2) \).
        Moreover, the \( (v_2,v_1) \)-path must also have length at least two.
    \end{enumerate}
    \item The cycle \( C \) is directed and there are distinct RCV \( v_1' \) and TCV \( v_2' \) in \( C \) such that:
    \begin{enumerate}[(1)]
        \item \( d_C(v_2',v_1') \geq 2 \); and
        \item all the other cut-vertices are internal to \( P = (w_0,w_1, \ldots ,w_{k-1},w_k) \), where \( w_0 = v_1' \) and \( w_k = v_2' \).
        Besides, if \( w_i \) is an RCV and \( w_j \) is a TCV then \( i \leq j \) for every \( i,j \in \{ 0,1, \ldots ,k \} \).
    \end{enumerate}
\end{enumerate}
Otherwise, we say that the cycle is \textit{truly satisfactory} and use TSC to simplify.

\begin{lem}\label{lemma.falselysat}
    Let \( D \) be an oriented cactus graph, \( C \subseteq D \) a satisfactory cycle and \( u_1,u_2,v_1,v_2,v_1',v_2' \) as in the above definition.
    Then:
    \begin{itemize}
        \item if \( C \) is truly satisfactory and \( S \) is a minimum hull set of \( D \) then \( I(S) \supseteq V(C) \);
        \item if \( C \) is falsely satisfactory and \( \mathcal{S} = N^+(V(C)) \cup N^-(V(C)) \) then \( I(\mathcal{S}) \nsupseteq V(C) \).
        Moreover, the vertices not in \( I [\mathcal{S}] \) are the following ones:
        \begin{itemize}
            \renewcommand{\labelitemii}{$\star$}
            \item if \( C \) is of type 1, the internal vertices of the \( (w_1,w_2) \)-path where \( w_1 \in \{ u_1,v_2 \} \) and \( w_2 \in \{ u_2,v_1 \} \) are as close as possible;
            \item if \( C \) is of type 2, the internal vertices of the \( (v_2',v_1') \)-path.
        \end{itemize}
        And also none of these vertices is a cut-vertex.
    \end{itemize}
\end{lem}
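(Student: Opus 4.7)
The plan is to handle both implications by case analysis grounded in the structural classification of cycles, using Lemmas~\ref{lemma.paths} and~\ref{lemma.pathsparts} as the main structural tools.

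For the first bullet, I would argue by contraposition. Suppose $v \in V(C) \setminus I(S)$ for a minimum hull set $S$ built as in Theorem~\ref{teo.hcactus}. Tracing through the case analysis in the proof of that theorem, a vertex of $C$ enters $[S]$ only at the second iteration $I^2(S)$ precisely in three configurations: (i) a 3-vertex directed cycle whose non-extreme transitive vertex sits between a TCV and an RCV; (ii) a cycle with a unique source $u_1$ and a unique sink $u_2$ whose longer $(u_1,u_2)$-path has a cut-vertex in its interior forcing the surrounding vertices to be covered only via split geodesics; and (iii) a directed non-trap non-leaf cycle where the cut-vertices concentrate on a specific arc. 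Matching these against the FSC definitions shows that (i) and (iii) correspond to FSC Type~2 and (ii) corresponds to FSC Type~1. Hence $C$ must be falsely satisfactory, which is the contrapositive.

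For the second bullet, the keystone is Lemma~\ref{lemma.pathsparts}(2): any path between two vertices $u, w \in \mathcal{S}$ intersects $C$ in a single connected sub-path whose endpoints are the same across all $(u,w)$-paths, and by Lemma~\ref{lemma.paths} those endpoints are cut-vertices of $C$ (or extreme vertices of $C$ when $u$ or $w$ lies in $V(C)$). For Type~1 FSC, the short $(u_1,u_2)$-path is strictly shorter than $P$, so any geodesic using both $u_1$ and $u_2$ traverses the short path and ignores $P$; any geodesic entering $C$ at a cut-vertex $v$ internal to $P$ covers only a sub-arc of $P$. Conditions (1), (2), (3) ensure that such a sub-arc cannot contain the innermost $(w_1,w_2)$-segment described in the lemma, since otherwise an alternative through the short path would be strictly shorter, contradicting geodesity. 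For Type~2 FSC, the ordering condition on the cut-vertices along $P=(w_0,\ldots,w_k)$ forces any geodesic entering $C$ at a TCV to exit at an RCV occurring no earlier in the cycle orientation, so every geodesic traverses only the $(v_1',v_2')$-arc. Since $C$ is directed and $d_C(v_2',v_1')\geq 2$, the internal vertices of the $(v_2',v_1')$-arc lie outside $I(\mathcal{S})$.

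That the missing vertices are not cut-vertices follows directly from the FSC definitions: Type~1 conditions (1)-(3) forbid cut-vertices in the innermost $(w_1,w_2)$-segment, and Type~2 condition (2) locates every cut-vertex of $D$ inside $C$ on the complementary $(v_1',v_2')$-arc. The main obstacle will be the bookkeeping for Type~1 FSC when both an internal RCV $v_1$ and an internal TCV $v_2$ occur in $P$: condition (3) pins down their relative order, and one must compare alternate path lengths carefully to rule out any geodesic between external neighbors that threads through the central segment. The length constraints in (1)-(3) are exactly what is needed to ensure that any such candidate geodesic is defeated by a strictly shorter route via the short $(u_1,u_2)$-path or via a closer cut-vertex.
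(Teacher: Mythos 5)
Your handling of the second bullet is broadly in the spirit of the paper's proof: use Lemma~\ref{lemma.pathsparts} to constrain how any geodesic between vertices of $\mathcal{S}$ meets $C$, observe that such paths can enter $C$ only at RCVs and leave only at TCVs, and let the ordering/length conditions in the FSC definition isolate the uncovered segment. It is somewhat vaguer than the paper (the decisive point is this structural entry/exit restriction plus the shortness of the short $(u_1,u_2)$-path, rather than a generic geodesity comparison), and you still owe the positive half of the ``moreover'' clause, namely that every vertex outside the designated $(w_1,w_2)$-, respectively $(v_2',v_1')$-, segment actually is in $I(\mathcal{S})$.

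The genuine gap is in the first bullet. Arguing by contraposition ``by tracing through the case analysis in the proof of Theorem~\ref{teo.hcactus}'' cannot work, for two reasons. First, that proof only establishes that certain vertices lie in $I^2(S)$; it never shows they are \emph{not} in $I(S)$, so from $v\in V(C)\setminus I(S)$ you cannot infer which of its cases applied --- its cases are not a characterization of non-membership in $I(S)$. Second, your matching of configurations to FSC types is wrong. A cycle with a unique source $u_1$ and unique sink $u_2$ whose longer $(u_1,u_2)$-path has an internal cut-vertex (your configuration (ii)) need not be an FSC of type 1: if, say, an internal RCV is adjacent to $u_1$, so that condition (1) of the FSC definition fails, the cycle is truly satisfactory and the lemma then claims $V(C)\subseteq I(S)$ --- and proving this needs a new argument, namely using the failure of conditions (1)--(3) to exhibit a hull-set vertex $w_1'$ and a $(w_1',w_2)$-geodesic traversing the whole long path, which is exactly what the paper does. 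The same finer analysis is needed for directed non-trap non-leaf cycles, where the dividing line is the RCV/TCV ordering along the arc containing the cut-vertices together with $d_C(v_2',v_1')\geq 2$, not merely ``cut-vertices concentrate on a specific arc''; and your configuration (i) is incoherent, since a directed $C_3$ has no transitive vertex, while a $C_3$ with a transitive vertex is truly satisfactory and fully covered in $I(S)$. Finally, the lemma asserts $I(S)\supseteq V(C)$ for \emph{every} minimum hull set $S$, whereas you argue only for the particular hull set built in Theorem~\ref{teo.hcactus}; the paper avoids this by extracting the needed geodesic endpoints from Lemmas~\ref{lemma.paths} and~\ref{lemma.pathsparts}, which are extreme vertices or vertices of unsatisfactory cycles and hence belong to every hull set.
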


\begin{proof}
    First we analyse the truly satisfactory cycles.
    We divide our analysis in cases according to Lemma \ref{lemma.extcycles}.
    Let \( C \) have three vertices \( u,v,w \) such that \( u \) is a source, \( v \) is transitive and \( w \) is a sink.
    By Lemmas \ref{lemma.paths} and \ref{lemma.pathsparts} we conclude that \( u,w \in I(S) \).
    If \( v \) is also transitive in \( D \), then it is already in \( S \).
    Else, it is a cut-vertex and using the same lemmas we can see that \( v \in I(S) \).

    If \( C \) is a cycle with at least four extreme vertices, take \( v \in V(C) \) arbitrarily.
    Let \( P \) be a maximal path containing \( v \); if \( P \) is an \( u_1,u_2 \)-path, then it follows that \( u_1 \) is a source and that \( u_2 \) is a sink.
    Again by Lemmas \ref{lemma.paths} and \ref{lemma.pathsparts} we deduce that the vertices of \( P \) are all in \( I(S) \), in particular \( v \).
    Since this argument holds for every vertex of \( C \), we have \( V(C) \subseteq I(S) \).

    Now suppose that \( \ext (C) = \{ u_1,u_2 \} \), with \( u_1 \) source and \( u_2 \) sink.
    By Lemmas \ref{lemma.paths} and \ref{lemma.pathsparts} there are \( w_1,w_2 \in S \) such that every \( (w_1,w_2) \)-path contains one of the two \( (u_1,u_2) \)-paths in \( C \).
    We know that the vertices of an \( (u_1,u_2) \)-geodesic are in \( I(S) \).
    If the two paths are geodesics we are satisfied, else let \( v_1 \) be a hypothetical RCV and \( v_2 \) be a hypothetical TCV, both internal to the longest \( (u_1,u_2) \)-path \( P \).
    Suppose that there is no TCV internal to \( P \), then we can take \( v_1 \) such that \( (u_1,v_1) \in A(C) \).
    By the Lemmas \ref{lemma.paths} and \ref{lemma.pathsparts} there is a \( w_1' \in S \) and a \( (w_1',w_2) \)-geodesic containing the \( (v_1,u_2) \)-path.
    Since this path uses every internal vertex of \( P \) we have \( V(C) \subseteq I(S) \).
    Analogously, if there is no RCV interior to \( P \) we also have \( V(C) \subseteq I(S) \).
    If there are both RCV and TCV internal to \( P \) we can take \( v_1 \) and \( v_2 \) such that all internal vertices of \( P \) are either in the \( (u_1,v_2) \)-path or in the \( (v_1,u_2) \)-path.
    Thus we still have \( V(C) \subseteq I(S) \).

    Now let \( C \) be a directed cycle, neither trap nor leaf.
    Since it is truly satisfactory there must be \( v_1 \neq v_2 \in V(C) \) such that \( v_1 \) is an RCV and \( v_2 \) is a TCV.
    If there are such \( v_1 \) and \( v_2 \) with \( (v_2,v_1) \in A(C) \), we can take \( w_1,w_2 \in S \) such that there is a \( (w_1,w_2) \)-geodesic containing the \( (v_1,v_2) \)-path in \( C \), which in turn contains all the vertices of \( C \).
    If not then take \( v_1,v_2 \)  such that every other cut-vertex of \( C \) lies in the \( (v_1,v_2) \)-path.
    Again by the fact that \( C \) is truly satisfactory, we must have an RCV \( v_3 \) and a TCV \( v_4 \), different from one another, such that the \( (v_1,v_2) \)-path contains the \( (v_4,v_3) \)-path in \( C \).
    Once more by Lemmas \ref{lemma.paths} and \ref{lemma.pathsparts} there are \( w_3,w_4 \in S \) and a \( (w_3,w_4) \)-geodesic containing the \( (v_3,v_4) \)-path in \( C \).
    Knowing that the \( (v_4,v_3) \)-path is contained in the \( (w_1,w_2) \)-path, we thus conclude that all the vertices of \( C \) either in the \( (v_3,v_4) \)-path or in the \( (v_4,v_3) \)-path will be in \( I(S) \), which implies \( V(C) \subseteq I(S) \).

    We next treat the cases where \( C \) is an FSC.
    Take \( C \) of type 1 and let \( u_1 \) and \( u_2 \) respectively be the source and the sink in \( C \).
    Due to previous arguments, it is straightforward that the vertices internal to the \( (u_1,u_2) \)-geodesic are in \( I(\mathcal{S}) \).
    If there are only RCV's in the other \( (u_1,u_2) \)-path, with \( v_1 \) being the one such that \( d_C(u_1,v_1) \) is as small as possible, there is at least one vertex in the \( (u_1,v_1) \)-path which will not be in \( I(\mathcal{S}) \).
    The case in which there are only RCV's is analogous.
    We thus suppose that there are both RCV's and TCV's and let \( v_1 \) be an RCV and \( v_2 \) be a TCV such that \( d_C(v_2,v_1) \) is the smallest value possible.
    By repetitive arguments we state that the vertices in the \( (u_1,v_2) \)-path and in the \( (v_1,u_2) \)-path are in \( I(\mathcal{S}) \).
    However, knowing that there is at least one vertex in the \( (v_2,v_1) \)-path we have that this will not be in \( I(\mathcal{S}) \).

    Assume now \( C \) of type 2, thus there are \( v_1 \) RCV and \( v_2 \) TCV such that the other cut vertices are in the \( (v_1,v_2) \)-path \( P \).
    Moreover if \( P = (v_1=u_0,u_1, \ldots ,u_k = v_2) \) and \( u_i \) and \( u_j \) are respectively an RCV and a TCV we have \( i \leq j \).
    Using again Lemma \ref{lemma.paths} and Lemma \ref{lemma.pathsparts} we have that all the vertices in any \( (u_i,u_j) \)-path are in \( I(\mathcal{S}) \), and these are the only ones.
    Since the \( (v_1,v_2) \)-path contains every one of these vertices and there is at least one vertex internal to the \( (v_2,v_1) \)-path in \( C \) we have \( V(C) \nsubseteq I(\mathcal{S}) \).

    To close this demonstration let \( v \) be a vertex in an FSC not in \( I(\mathcal{S}) \).
    If it had any neighbor outside \( C \) he would be a cut-vertex, thus contradicting the definition of FSC.
\end{proof}

Since, for each FSC, the vertices not in \( I(S) \) are not connected to the rest of the graph, we conclude that every geodetic set must have at least one non-extreme vertex of each FSC.

\begin{thm}\label{theo.gcactus}
    Let \( D \) be an oriented cactus graph.
    There is a geodetic set composed by all the extreme vertices and one non-extreme of each unsatisfactory and falsely satisfactory cycle.
    Moreover, this geodetic set is minimum.
\end{thm}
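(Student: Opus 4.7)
The proof plan pairs an upper-bound construction with a matching lower bound, building on the structural groundwork already in hand.

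For the lower bound, I would argue that any geodetic set $S$ of $D$ must contain every extreme vertex of $D$; at least one non-extreme vertex from each unsatisfactory cycle $C$ (by the co-convex witness in Lemma~\ref{lemma.unsatisfactory}, which every hull set, and hence every geodetic set, must intersect); and at least one vertex from the ``missing'' segment of each falsely satisfactory cycle $C$. The last claim follows from Lemma~\ref{lemma.falselysat}: any $(s_1,s_2)$-geodesic through a missing vertex $v$ contains, as a subpath, a geodesic between the two external neighbors of $V(C)$ at which it enters and leaves $C$, both of which already lie in $\mathcal{S} = N^+(V(C)) \cup N^-(V(C))$; this would place $v$ in $I(\mathcal{S})$, contradicting the lemma. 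Since the witnessing sets for these three requirements live in pairwise disjoint parts of $V(D)$, they sum to $|\ext(D)| + |\mathrm{UCs}| + |\mathrm{FSCs}|$.

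For the upper bound, I would let $S$ consist of $\ext(D)$, together with the non-extreme vertex chosen inside each UC exactly as in the proof of Theorem~\ref{teo.hcactus}, plus one carefully chosen internal vertex $v_C$ of the missing subpath of each FSC (namely, a vertex in the interior of the $(w_1,w_2)$-subpath for type~1, or of the $(v_2',v_1')$-subpath for type~2, as identified by Lemma~\ref{lemma.falselysat}). I would verify $I(S) = V(D)$ by case analysis on the location of $v$: vertices in an unsatisfactory cycle are covered by the proof of Theorem~\ref{teo.hcactus}; vertices in a truly satisfactory cycle are covered by Lemma~\ref{lemma.falselysat}; vertices outside all cycles are handled by Lemmas~\ref{lemma.paths} and~\ref{lemma.pathsparts}; only vertices inside an FSC require fresh reasoning.

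The FSC case is where the main obstacle lies. For such a cycle $C$, Lemma~\ref{lemma.falselysat} already places the vertices outside the missing subpath in $I(\mathcal{S}) \subseteq I(S)$. For a missing vertex $v \neq v_C$, I would combine $v_C \in S$ with extreme or UC-vertices furnished by Lemmas~\ref{lemma.paths} and~\ref{lemma.pathsparts} to produce an $(s_1, s_2)$-geodesic through $v$. Since $v_C$ is not a cut-vertex (by Lemma~\ref{lemma.falselysat}), every path to $v_C$ from outside $C$ must enter $C$ at one endpoint of the missing subpath and then traverse it entirely; the technical work is to invoke the FSC conditions~(1)--(3) for type~1 and~(1)--(2) for type~2 to rule out any shorter alternative around $C$ that would bypass $v$, so that the traversal is indeed a geodesic of $D$ and $v \in I(S)$.
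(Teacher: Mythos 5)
Your construction and verification of the upper bound follow the paper's own route: start from the minimum hull set of Theorem~\ref{teo.hcactus}, use Lemma~\ref{lemma.falselysat} to see that the only vertices not yet in $I(S)$ are the ``missing'' vertices of the falsely satisfactory cycles, add one vertex of each such cycle, and use the cactus structure (Lemmas~\ref{lemma.paths} and~\ref{lemma.pathsparts}) to show the traversal of the missing segment is forced and hence geodesic; the paper merely fixes the added vertex at the boundary of that segment (e.g.\ the vertex of $N^+(v_2')\cap V(C)$ for a type-2 FSC) so that a single forced geodesic sweeps it. This half of your plan is sound and essentially identical to the paper's.

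The gap is in your lower bound. You claim that every geodetic set must contain a vertex of the missing segment itself, justified by saying that any geodesic through a missing vertex enters and leaves $C$ at external neighbours, which lie in $\mathcal{S}=N^+(V(C))\cup N^-(V(C))$. That argument only applies when both endpoints of the geodesic are outside $C$; an endpoint may perfectly well be a vertex of $C$ outside the missing segment, and then no contradiction with Lemma~\ref{lemma.falselysat} arises. (Your own upper bound already exhibits such geodesics: they have the endpoint $v_C\in V(C)$.) In fact the localization is false: for a type-2 FSC a geodetic set may contain the transmitter cut-vertex $v_2'$ instead of any missing vertex, because in a cactus the only directed path from $v_2'$ to a vertex of $S$ lying beyond the first TCV reached after $v_1'$ wraps around the whole missing segment, so that segment is covered while $S$ avoids it; an analogous choice works on the long path of a type-1 FSC. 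What is genuinely forced --- and what the paper asserts --- is only that every geodetic set contains \emph{some} non-extreme vertex of each FSC. With this weaker statement your count $|\ext(D)|+\#\mathrm{UC}+\#\mathrm{FSC}$ no longer follows from ``the witnesses live in disjoint parts of $V(D)$'': the forced FSC vertex may be a cut-vertex shared with another cycle, and one still has to argue (from the trap-cycle and FSC definitions, e.g.\ that a shared vertex would be both a receiver and a transmitter cut-vertex of the other cycle, or that $u_1,u_2$ cannot do the covering) that it cannot simultaneously serve as the witness for an unsatisfactory cycle or for another FSC. That disjointness argument is the missing piece of your minimality proof; the paper states the weaker forcing claim and does not rely on the stronger localization you use.
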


\begin{proof}
    By Theorem \ref{teo.hcactus}, we can build a minimum hull set \( S \) only with the extreme vertices and one non-extreme form each unsatisfactory cycle.
    Besides, the only vertices not in \( I(S) \) are in satisfactory cycles.
    Moreover, by Lemma \ref{lemma.falselysat} we can be even more specific and say which vertices are these.
    So we only have left to show that adding the remaining vertices in the statement to \( S \) will give us a minimum geodetic set.
    
    If \( C \) is a falsely satisfactory cycle of type 1, let \( u_1,u_2,v_1,v_2 \) be as in the definition.
    If there is not any TCV in the longest \( (u_1,u_2) \)-path of \( C \), take \( w \in N^+(u_1) \) and add it to \( S \).
    Once more by the Lemmas \ref{lemma.paths} and \ref{lemma.pathsparts} we can take \( w_2 \in S \) such that there is a \( (w,w_2) \)-geodesic containing the \( (w,v_2) \)-path, which gives us the desired result.
    The case where there are no RCV's is analogous.
    If we have both RCV and TCV, take \( v_1 \) and \( v_2 \) as close as possible and take \( w \) an internal vertex of the \( (v_2,v_1) \)-path, which exists by definition.
    We can take \( w_1,w_2 \in S \) such that there are a \( (w_1,w) \)-geodesic and a \( (w,w_2) \)-geodesic respectively containing the \( (v_1,w) \)-path and the \( (w,v_2) \)-path in \( C \).
    
    Now let \( C \) be of type 2 and \( v_1',v_2' \) be as in the definition.
    Take \( v \in N^+(v_2') \cap V(C) \).
    Notice that there are \( w \in S \) and a \( (v,w) \)-geodesic containing the \( (v,v_2') \)-path in \( C \).
    Since the last path contains all vertices of \( C \) we have \( V(C) \subseteq I(S) \).
\end{proof}

Once more, such minimum geodetic set can be found in linear time by just analyzing the cycles and determining which ones are (truly/falsely) satisfactory and unsatisfactory.

\section{Further research}
\label{section.conc}

We first proved that the hull number of an arbitrary oriented graph $D$ is at most the number of extreme vertices of $D$ plus $\frac{2}{3}$ of the non-extreme.
A natural question is whether a similar bound holds for the geodetic number. 

Here we also proved that, given an oriented graph $D$, determining $\ohn(D)$ and $\ogn(D)$ are \NP-hard problems even if the underlying graph of $D$ is bipartite.
Equivalent results were known in the literature~\cite{CIPCTHN,SROTGNOAG} for the undirected case.
We also proved that computing the geodetic number of an oriented split graph is \NP-hard, while the undirected version can be solved in linear time~\cite{SROTGNOAG}.
We finally proved that computing the geodetic number of an oriented cobipartite graph is also \NP-hard.

It is known that determining $\hn(G)$ is a \NP-hard problem even if $G$ is chordal~\cite{TGHNIHFCG}.
Another open problem would be:

\begin{conj}
    Given an oriented graph $D$ and a positive integer $k$, then determining whether $\ohn(D) \leq k$ is \NP-complete, even if the underlying graph of $D$ is chordal.
\end{conj}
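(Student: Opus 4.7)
The plan is to mirror the strategy used in Section~\ref{section.bip} for partial cubes, but with an edge-replacement gadget that preserves chordality. Since it is known~\cite{TGHNIHFCG} that computing $\hn(G)$ is \NP-hard for chordal graphs $G$, it would suffice to design, for each chordal $G$, an oriented graph $D$ whose underlying graph is chordal and such that $\ohn(D)$ encodes $\hn(G)$ with controlled overhead. Set-cover-style reductions analogous to those in Section~\ref{sec.setcover} are much harder to adapt here, because, as noted for the split construction, small seed sets like $\{u,v,w\}$ tend to generate the whole vertex set after two applications of the interval function — which is fatal for the hull version although not for the geodetic version.

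Concretely, I would attempt to replace each edge $uv$ of $G$ by a small oriented gadget $\Gamma_{uv}$, glued to $G$ at the anchor vertices $u$ and $v$. Three properties must hold simultaneously: (i) the underlying graph of $D$ is chordal, so no induced cycle of length $\geq 4$ survives, including across adjacent gadgets; (ii) there exist a $(u,v)$-geodesic and a $(v,u)$-geodesic whose interiors lie in $\Gamma_{uv}$, so that the convex structure of $G$ lifts to $D$ in the style of Lemma~\ref{lemma.upboundGCfour}; and (iii) the interior of $\Gamma_{uv}$ enters the convex hull precisely when both $u$ and $v$ do, enabling an exchange argument analogous to Lemma~\ref{lem.hngleqoracf} that turns any minimum hull set of $D$ into a hull set of $G$ of no larger cardinality. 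A first candidate would be a directed $C_6$ between $u$ and $v$ (with $u,v$ antipodal) triangulated by oriented chords placed so that no chord shortcuts either of the two anchor-to-anchor geodesics, the chord orientations being fixed against the cycle orientation. Once the gadget is fixed, the two lemmas analogous to \ref{lemma.upboundGCfour} and \ref{lem.hngleqoracf} would yield $\ohn(D) = \hn(G) + c$ for a constant depending only on the gadget, closing the reduction.

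The principal obstacle is reconciling (i) and (iii). Chordality forces every induced cycle in $D$ to have length three, so gadgets rich in triangles acquire many simplicial vertices; in the oriented setting each such vertex tends to be transitive, transmitter, or receiver, hence \emph{extreme}, and is forced into every hull set by the discussion following the definition of $\ext(D)$. If the gadget contributes even one forced extreme vertex per edge, then $\ohn(D)$ grows linearly in $|E(G)|$ and drowns out the signal from $\hn(G)$. Any viable gadget must therefore be triangulated around its interior while keeping every interior vertex non-extreme, and this must be preserved under gluing at anchors shared with other gadgets — a delicate in-/out-degree bookkeeping problem that becomes harder on chordal $G$ with large cliques, since the anchor vertex $u$ then meets many gadgets at once and its neighborhood structure is heavily constrained.

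I expect that the right fix is to let the gadget depend on the local clique structure of $G$ at its endpoints (for instance, using the orientation suggested by a perfect elimination ordering of $G$), so that simpliciality in $D$ is inherited only from simpliciality in $G$, exactly as in the undirected proof. The exchange argument analogue of Lemma~\ref{lem.hngleqoracf} would then proceed by walking any hull vertex sitting in a gadget interior back out to its nearer anchor, using that no odd-parity obstruction arises — the role played by bipartiteness in Lemma~\ref{lem.hngleqoracf} would here be played by the chord orientations ruling out closed walks of length two that cross the gadget. If uniform gadgetry proves impossible, a fallback is to reduce from undirected chordal hull number only on instances with bounded simplicial degree, and then extend the hardness by a padding argument; but I would attack the uniform gadget first, as it is the cleanest route to the conjectured result.
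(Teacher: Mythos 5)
You should first note that the paper does not prove this statement at all: it is posed as a \emph{conjecture} in Section~\ref{section.conc}, i.e.\ an open problem, so there is no proof of record to match. Your text is a research plan rather than a proof, and it does not close the conjecture. The central gap is that no gadget is actually exhibited and verified: properties (i)--(iii) are stated as desiderata, but the one concrete candidate (a directed $C_6$ with anchors antipodal, triangulated by chords oriented ``against'' the cycle) is never analyzed, and it is precisely where the plan breaks. Chordality forces every induced cycle of the gadget to be a triangle, and in an oriented triangle there are only two possibilities: either it is a directed $C_3$, or it has a source, a sink and a transitive vertex. In the first case the chord is itself an arc of a directed triangle and shortcuts one of the two anchor-to-anchor directed paths, destroying property (ii) (the interior vertices fall off the $(u,v)$- or $(v,u)$-geodesics, so they need not enter the hull and must be added to every hull set, inflating $\ohn(D)$ by $\Theta(|E(G)|)$); in the second case the triangle contributes extreme vertices, which by the result of~\cite{THNOAOG} quoted in Section~\ref{sec.pre} are forced into every hull set, again drowning the signal from $\hn(G)$ --- the very obstacle you name but do not overcome. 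Your proposed fix (make the gadget depend on a perfect elimination ordering) is only a direction, with no construction, no distance analysis, and no proof of the exchange lemma that would play the role of Lemma~\ref{lem.hngleqoracf}.

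A second, subtler gap: the analogue of Lemma~\ref{lem.hngleqoracf} in the paper genuinely uses bipartiteness through a parity argument (an odd cycle would arise in $G$ if two geodesics through a gadget vertex intersected internally), and the paper even gives a counterexample ($G=C_3$) showing the inequality $\hn(G)\leq\ohn(G_{\oracf})$ can fail without it. Chordal graphs are typically far from bipartite, so you cannot expect any parity-based replacement; your claim that ``the chord orientations rule out closed walks of length two'' does not address the actual failure mode, which is that a hull vertex located in a gadget interior may be certified by geodesics that avoid both anchors, so the walk-back-to-the-anchor exchange step has no justification. Until a gadget is specified for which (ii) and (iii) are proved and an exchange argument is carried out in full, the conjecture remains open, exactly as the paper states.
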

In fact, even determining such parameters for tournaments seems a hard task.
Another natural problem is to find some graph class $\mathcal{G}$ for which determining $\ohn(D)$ is a \NP-hard problem, while determining $\hn(G)$ can be solved in polynomial time, for some simple graph $G \in \mathcal{G}$ and some orientation $D$ of $G$.

Finally, bounds and complexity results for other graph classes are also widely open. One could also study such parameters in other graph convexities. We should emphasize that our bound in Section~\ref{section.st} and the reductions to bipartite and cobipartite graphs also hold for the corresponding parameters in the two-path convexity~\cite{PWW2008}.

\bibliography{oriented-hull-geodetic}

\end{document}